\numberwithin{equation}{section}
\def\R{\mathbf R}
\def\Z{\mathbf Z}
\def\C{\mathbb{C}}
\def\R{\mathbb{R}}
\def\Z{\mathbb{Z}}
\def\I{\mathcal{I}}
\newcommand{\beq}{\begin{equation}}
\newcommand{\eeq}{\end{equation}}
\newcommand{\ben}{\begin{eqnarray}}
\newcommand{\een}{\end{eqnarray}}
\newcommand{\beno}{\begin{eqnarray*}}
\newcommand{\eeno}{\end{eqnarray*}}
\let\tht=\theta
\let\ld=\lambda
\let\ga=\gamma
\let\De=\Delta
\let\nb=\nabla
\let\tht=\theta
\let\Tht=\Theta
\let\af=\alpha
\let\bt=\beta
\let\Ld=\Lambda
\newcommand{\ba}{\begin{array}}
\newcommand{\ea}{\end{array}}
\newcommand{\be}{\begin{equation}}
\newcommand{\ee}{\end{equation}}
\newcommand{\ban}{\begin{eqnarray*}}
\newcommand{\ean}{\end{eqnarray*}}
\numberwithin{equation}{section}
\newtheorem{theorem}{Theorem}[section]
\newtheorem{lemma}[theorem]{Lemma}
\newtheorem{remark}[theorem]{Remark}
\begin{document}
\begin{CJK*}{UTF8}{gkai}
\title[Friedrichs extensions] {Friedrichs extensions  for a class of singular discrete
 linear Hamiltonian systems}

\author{Guojing Ren}
\address{School of statistics and Mathematics,
 Shandong  University of Finance and Economics, Jinan, Shandong 250014, P. R.
China}
\email{gjren@sdufe.edu.cn}

\author{Guixin Xu}
\address{School of Mathematics and Statistics, Beijing Technology and Business University, Beijing, P. R. China}
\email{guixinxu$_-$ds@163.com}

\maketitle

\begin{abstract}
This paper is concerned with the characterizations of the Friedrichs extension
for a class of singular discrete linear Hamiltonian systems.
The existence of recessive solutions and the existence of the Friedrichs extension are proved
under some conditions.
The self-adjoint boundary conditions are obtained
by applying the  recessive solutions  and then
the  characterization of the Friedrichs extension  is obtained in
terms of boundary conditions via linear independently recessive solutions.
\end{abstract}

{\bf \it Keywords}:\ Discrete  Hamiltonian system;  Friedrichs extension; Disconjugacy;  Recessive solution.

{2020 {\bf \it Mathematics Subject Classification}}:  34B20, 39A70.

\section{Introduction}

In this paper, we consider the  following singular discrete linear Hamiltonian system
\begin{eqnarray}\label{HS1}
\left\{\begin{array}{l}
         \De u(t)=A(t)u(t+1)+B(t)v(t),\\
         \De v(t)=(C(t)-\ld W(t))u(t+1)-A^*(t)v(t),\quad t\in \I,
       \end{array}\right.
\end{eqnarray}
where  $\De$ is the forward difference operator, i.e., $\De u(t)=u(t+1)-u(t)$;
$A(t)$ is $n\times n$ complex matrix, $B(t)$, $C(t)$ and $W(t)$ are $n\times n$ Hermitian  matrices satisfying
\begin{equation*}
 B(t)\ge 0,\quad W(t)\ge 0;
\end{equation*}
$\ld\in \C$ is a spectral parameter; and  $\I:=[0,\infty)\cap\Z=\{t\}_{t=0}^{\infty}$.
To ensure the existence and uniqueness of the solution of any
initial value problem for $(1.1)$, we always assume that
\begin{equation*}
I_n-A(t) \;{\rm is\; invertible\;on \;}\I,
\end{equation*}
where $I_n$ is the $n\times n$ unit matrix.

Self-adjoint extension problems are most fundamental in
the study of spectral problems for  differential operators.
There are two important theories available.
One is the Weyl-Titchmarsh theory, which was started with Weyl¡¯s work  in 1910  and
it has been generalized to linear Hamiltonian differential systems
(cf. \cite{Atkinson1964,HintonShaw1981,Naimark1968,Weidmann1980} and their references).
The other is the GKN theory, which was established by Glazman, Krein, and Naimark in 1950's .
Based on these two theories, the self-adjoint extensions for  linear  differential
operators  have been widely studied \cite{SunH2010,SunH2011,SunJ1986,Wang2009}.

Among all the self-adjoint extensions, there is a particular one which preserves
the same lower bound when the associated minimal operator is bounded from below.
This self-adjoint extension is known as the Friedrichs extension,
which was initially constructed by K. Friedrichs for a densely defined operator in 1930's.
His work has been widely developed by many authors to singular  differential operators using various approaches
(cf. \cite{Kalf1972,Kalf1978,Marletta-Zettl2000,Moller-Zettl1995,Niessen-Zettl1990,Niessen-Zettl1992}).
By using  the existence of the  principal solutions of linear Hamiltonian differential systems \cite{Reid1980},
a characterization of  the Friedrichs extension of a class of singular Hamiltonian differential operators
was given in terms of principal solutions \cite{Zheng2018}.
Recently, the  Friedrichs extension of a class of singular  differential systems including non-symmetric cases
was characterized \cite{YangSun2021}.

Difference equations are usually regarded as the discretization of the corresponding differential equations.
It has been found that the  maximal operator corresponding to a formally self-adjoint difference equation
is multi-valued, and the corresponding minimal operator is non-densely defined in general case \cite{Ren2014AML, Shi2011}.
Not only that,  this case can  happen for general linear Hamilton differential systems \cite{Lesch2003}.
Therefore, the classical spectral theory for symmetric operators, i.e.,
densely defined and Hermitian single-valued operators, are not available  in the studying
of the spectral properties of  differential and difference equations in general case.

Due to the above reasons, some researchers began to focus on extending the  theory of linear operators to
linear  non-densely defined or multi-valued operators (which are called linear relations or linear subspaces),
and many good results have been obtained (cf. \cite{Cross1998,Hassi2017,Shi2017} and their references).
E. A. Coddington successfully extended the von Neumann self-adjoint
extension theory for symmetric operators to Hermitian subspaces \cite{Coddington1973}.
Y. Shi extended the classical GKN theory for symmetric operators to Hermitian subspaces \cite{Shi2012}.
Based on the above, a complete characterization of all the self-adjoint extensions for a class of
discrete linear Hamiltonian systems are obtained in
terms of boundary conditions via linear independent square summable
solutions \cite{Ren2014Laa}.

The spectral properties of  discrete linear Hamiltonian  systems have been widely discussed
 (cf. \cite{Ahlbrandt-Peterson1996,Bohner1996,Erbe-Yan1992,Ren2011,Shi2006} and their references).
 It is worth mentioning  that
M. Bohner  and his coauthors discussed the disconjugacy  of discrete linear Hamiltonian  systems
and  proved the Reid Roundabout Theorem \cite[Theorem 2]{Bohner1996}.
In \cite{Ahlbrandt-Peterson1996}, some properties of recessive solutions of (1.1) were discussed.
The Friedrichs extension of  semi-bounded  second-order difference operators was discussed in \cite{Benammar-Evans1994}.
To the best of our knowlege, the existence of the recessive solutions of (1.1) has  not be given and the
Friedrichs extension of the minimal subspace corresponding to (1.1) has not been established.

The rest of this paper is organized as follows.
In Section 2, the basic concepts and useful results on the linear relations and   the sesquilinear forms
are recalled. In addition, several important difference equations related with (1.1) are listed at the end of this section.
In Section 3, the existence of the recessive solutions of (1.1) and  the existence  of the Friedrichs extension
of the minimal subspace generated by (1.1) are proved under some conditions, respectively.
In addition, a characterization of the  recessive solutions of (1.1) is obtained.
In Section 4, a characterization of matrix $\Tht$ is obtained by using the  recessive solutions
and then a characterization of the  Friedrichs extension is established in terms of boundary condition via the recessive solutions.

\section{Preliminaries}

In this section, some fundamental and useful results about  linear relations and   sesquilinear forms
are recalled.
At the end of this section, several important difference equations related to (1.1) are listed.

\subsection{Linear relations}

Let $\mathcal{X}$  be a complex Hilbert  space  with inner product $\langle\cdot, \cdot\rangle$.
Let $T$ and $S$ be  two linear relations (briefly, relations or named linear subspaces) in $\mathcal{X}^2:=\mathcal{X}\times \mathcal{X}$.
The domain and the range of $T$  are  defined as follows
\begin{eqnarray*}
&&\mathcal{D}(T):=\{x\in \mathcal{X}:\; (x,x')\in T \},\\
&&\mathcal{R}(T):= \{x'\in \mathcal{X}: \;(x,x')\in T \}.
\end{eqnarray*}
The adjoint of $T$ and the sum of two  relations are defined as
\begin{eqnarray*}
&&T^*:=\{(y,y')\in \mathcal{X}^2: \;\langle x,y'\rangle=\langle x',y\rangle\; {\rm  for\; all}\; (x,x')\in T\},\\
&&T^{-1}:=\{(x',x):  (x,x')\in T\},\\
&&T+S:=\{(x,x'+x''): \; (x,x')\in T, \;  (x,x'')\in S\}.
\end{eqnarray*}
It has been shown that $T$ is densely defined if and only if  $T^*$
is single-valued \cite[Theorem 3.1]{Ren2014AML}.
$T$ is said to be {\it Hermitian}  if $T\subset T^*$.
 $T$ is said to be {\it symmetric}   if $T\subset T^*$ and $\mathcal{D}(T)$ is dense in $\mathcal{X}$.
 $T$ is said to be {\it self-adjoint}  if $T=T^*$.

Denote
\begin{equation*}
  T(x)=\{x'\in \mathcal{X}:\; (x, x')\in T\}.
\end{equation*}
It is clear that $T(0)=\{0\}$ if and only if there exists
a unique linear  operator $A_T$ from $\mathcal{D}(T)$ into $\mathcal{X}$
such that its graph is equal to $T$, i.e., $G(A_T)=T$.

Let $\ld\in \C$. The subspace $\mathcal{R}(T-\ld I))^\bot$ and the number
$d_{\ld}(T) := \dim \mathcal{R}(T-\ld I)^\bot$ are called the {\it deficiency
space} and {\it deficiency index} of $T$ with $\ld$, respectively.
By $\overline{T}$ denote the closure of $T$ in $\mathcal{X}^2$.
It can be easily verified that $d_{\ld}(T)=d_{\ld}(\overline{T})$ for all $\ld\in \C$.

Let $T$ be a relation in $\mathcal{X}^2$. The set
  \begin{equation*}
  \Gamma(T):= \{\ld\in \C: \exists\, c(\ld) > 0 \;{\rm s.t.}\; \|x'-\ld x\|\ge c(\ld)\|x\|,
  \forall (x,x')\in T\}
\end{equation*}
is called the regularity domain of $T$.
It has been shown by  \cite[Theorem 2.3]{Shi2012} that the deficiency index $d_{\ld}(T)$
is constant in each connected subset of $\Gamma(T)$.
If $T$ is Hermitian, then $d_{\ld}(T)$ is constant in the upper and lower half-planes.
Denote
\begin{equation*}
  d_{\pm}(T) := d_{\pm i}(T)
\end{equation*}
 for an  Hermitian linear relation $T$,
and call $d_{\pm}(T)$ the positive and negative deficiency indices of  $T$, respectively.

\begin{lemma} \cite[Theorem 15]{Coddington1973}
Let $T$ be a closed Hermitian subspace in  $\mathcal{X}^2$. Then $T$ has self-adjoint  extensions if and only if
$d_+(T)=d_-(T)$.
\end{lemma}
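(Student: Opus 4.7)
The plan is to adapt the classical Cayley transform argument from the operator setting to that of Hermitian subspaces. Define
\begin{equation*}
V := \{(x' - ix,\, x' + ix) : (x, x') \in T\}.
\end{equation*}
A short computation yields $\|x' + ix\|^2 - \|x' - ix\|^2 = 2i(\langle x', x\rangle - \langle x, x'\rangle)$, so the Hermitian property of $T$ is exactly equivalent to $V$ being (the graph of) a well-defined linear isometry from $\mathcal{R}(T - iI)$ onto $\mathcal{R}(T + iI)$, and closedness of $T$ in $\mathcal{X}^2$ corresponds to closedness of the graph of $V$.

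First I would verify single-valuedness: if $(x, x') \in T$ with $x' - ix = 0$, then $(x, ix) \in T$ and the Hermitian condition forces $\langle x, ix\rangle = \langle ix, x\rangle$, giving $x = 0$ and hence $x' = 0$. The multi-valued part $T(0)$, which by the Hermitian property is orthogonal to $\mathcal{D}(T)$, is handled automatically: pairs $(0, x') \in T$ map to $(x', x')$ under the Cayley transform, so they already lie in both $\mathcal{R}(T - iI)$ and $\mathcal{R}(T + iI)$ and are fixed by every unitary extension of $V$.

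Next, the deficiency indices are precisely the defects of this partial isometry: by the definitions recalled above, $d_+(T) = \dim \mathcal{R}(T - iI)^\perp$ and $d_-(T) = \dim \mathcal{R}(T + iI)^\perp$, the codimensions of the domain and range of $V$ in $\mathcal{X}$. The core structural step is then the inverse Cayley transform: given any unitary $U$ on $\mathcal{X}$ that extends $V$, the subspace
\begin{equation*}
T_U := \left\{\left(\tfrac{i}{2}(y + Uy),\ \tfrac{i}{2}(-y + Uy)\right) : y \in \mathcal{X}\right\}
\end{equation*}
is a closed self-adjoint subspace of $\mathcal{X}^2$ containing $T$, and conversely every closed self-adjoint extension of $T$ arises in this way from a unique unitary extension of $V$.

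The main technical point, and where the subspace setting genuinely differs from the classical operator case, is verifying that this inverse Cayley construction yields a \emph{self-adjoint} subspace rather than merely a Hermitian one, and that the correspondence $T \mapsto V$ is order-preserving under inclusion; both checks require careful bookkeeping of the multi-valued parts since $T$ need not be densely defined. Once this bijection between self-adjoint extensions of $T$ and unitary extensions of $V$ is in place, the lemma reduces to the purely geometric fact that an isometry from $\mathcal{R}(T - iI)$ onto $\mathcal{R}(T + iI)$ extends to a unitary on all of $\mathcal{X}$ if and only if the orthogonal complements $\mathcal{R}(T - iI)^\perp$ and $\mathcal{R}(T + iI)^\perp$ have the same Hilbert-space dimension, i.e., $d_+(T) = d_-(T)$.
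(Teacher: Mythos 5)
The paper offers no proof of this lemma to compare against: it is imported verbatim as Theorem 15 of Coddington's Memoir, and the Cayley-transform strategy you outline is precisely the mechanism used there (von Neumann's theory transplanted to linear relations), so your overall route is the right one and matches the cited source. Your preliminary checks are sound: for a Hermitian relation one has $\|x'\mp ix\|^2=\|x\|^2+\|x'\|^2$ for all $(x,x')\in T$, which simultaneously gives single-valuedness and isometry of $V$, and closedness of $\mathcal{R}(T\pm iI)$ when $T$ is closed; and your identification $d_+(T)=\dim\mathcal{R}(T-iI)^\perp$, $d_-(T)=\dim\mathcal{R}(T+iI)^\perp$ agrees with the paper's conventions, so the final reduction to extending an isometry between closed subspaces to a unitary is legitimate.

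There are, however, two concrete defects. First, your inverse Cayley formula is wrong as written: substituting $y=x'-ix$ and $Uy=Vy=x'+ix$ into $\bigl(\tfrac{i}{2}(y+Uy),\,\tfrac{i}{2}(-y+Uy)\bigr)$ yields the pair $(ix',-x)$ rather than $(x,x')$, so with your definition $T\not\subset T_U$ and $T_U$ is not an extension of $T$ at all. The correct inversion of $y=x'-ix$, $Uy=x'+ix$ is $x=\tfrac{i}{2}(y-Uy)$, $x'=\tfrac{1}{2}(y+Uy)$, i.e. $T_U=\{(\tfrac{i}{2}(y-Uy),\tfrac{1}{2}(y+Uy)):y\in\mathcal{X}\}$, whose multivalued part $T_U(0)$ consists of the pairs $(0,y)$ with $Uy=y$. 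Second, you yourself name the crux of the theorem --- that $T_U$ is self-adjoint rather than merely Hermitian, and that \emph{every} self-adjoint extension of $T$ arises from a unitary extension of $V$ --- but you only assert that it "requires careful bookkeeping" without carrying it out; in the relation setting this verification (computing $T_U^*$ from the unitarity of $U$ and tracking the multivalued parts) is essentially the whole content of Coddington's argument. As it stands the proposal is a correct outline of the standard proof with its key step deferred and its one explicit formula incorrect.
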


Next, we introduce a form on $\mathcal{X}^2\times \mathcal{X}^2$ by
\begin{align*}
[(x,x'):(y,y')]:=\langle x',y\rangle-\langle x,y'\rangle,\quad
(x,x'),(y,y')\in \mathcal{X}^2.
\end{align*}
It can be easily verified that $[:]$ is a conjugate bilinear and
skew-Hermitian map from $\mathcal{X}^2\times \mathcal{X}^2$ into $\C$.
Let $T$ be a closed Hermitian subspace in $X^2$ and satisfy
$d_+(T)=d_-(T)=d$. A set $\{\bt_j\}_{j=1}^{d}$ is called a GKN-set
for the pairs of subspaces $\{T,T^*\}$ if it  satisfies
\begin{itemize}
\item [{\rm (1)}] $\bt_j\in T^*$, $1\le j\le d$;
\item [{\rm (2)}] $\bt_1,\bt_2,\ldots,\bt_d$ are linearly independent in
$T^*$ {\rm(modulo $T$)};
\item [{\rm (3)}] $[\bt_j:\bt_k]=0$, $1\le j,k\le d$.
\end{itemize}

\begin{lemma} \cite[Theorem 4.7]{Shi2012}
Let $T$ be a closed Hermitian subspace in $\mathcal{X}^2$ and satisfy
$d_+(T)=d_-(T)=d$. A subspace $T_1$ in $\mathcal{X}^2$ is a self-adjoint extension  of $T$ if and
only if there exists a GKN-set $\{\bt_j\}_{j=1}^d$ for $\{T,T^*\}$
such that $T_1$ is determined by
\begin{align*}
    T_1=\{\gamma\in T^*:\;[\gamma:\bt_j]=0,\,1\le j\le d\}.
\end{align*}
\end{lemma}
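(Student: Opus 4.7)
The plan is to adapt the classical Glazman--Krein--Naimark framework for symmetric operators to the subspace (linear relation) setting, exploiting the fact that $T^*/T$ is a finite-dimensional quotient equipped with the skew-Hermitian form $[:]$.

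First I would collect the structural facts. Since $T\subset T^*$, the form $[:]$ vanishes identically on $T\times T^*$ and therefore descends to a well-defined skew-Hermitian form on $T^*/T$. Using the Hermitian-subspace analogue of the von~Neumann decomposition (which is also what underlies Lemma~2.1), one has $\dim(T^*/T)=d_+(T)+d_-(T)=2d$. I would then verify that the descended form on $T^*/T$ is non-degenerate; equivalently, $T=\{\gamma\in T^*:[\gamma:\delta]=0\ \mbox{for all}\ \delta\in T^*\}$. Any closed intermediate subspace $T\subset T_1\subset T^*$ corresponds to a subspace $T_1/T\subset T^*/T$, and the condition $T_1=T_1^*$ translates into $T_1/T$ being Lagrangian in this form, i.e.\ maximal isotropic of dimension~$d$.

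For sufficiency, given a GKN-set $\{\bt_j\}_{j=1}^d$, the three defining conditions translate to: the classes $\bt_j+T$ lie in $T^*/T$, are linearly independent, and pairwise annihilate each other under $[:]$; hence they span a $d$-dimensional isotropic, and therefore Lagrangian, subspace of $T^*/T$. The set $T_1$ specified by the boundary conditions $[\gamma:\bt_j]=0$ is precisely the preimage in $T^*$ of the annihilator of this Lagrangian subspace, which by non-degeneracy coincides with the subspace itself; so $T_1/T$ is Lagrangian, whence $T_1=T_1^*$. For necessity, a self-adjoint extension $T_1$ satisfies $T\subset T_1\subset T^*$, and $T_1=T_1^*$ forces $T_1/T$ to be Lagrangian in $T^*/T$, of dimension exactly~$d$. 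Choosing any basis $\{\bt_j+T\}_{j=1}^d$ of $T_1/T$ produces a GKN-set, and $T_1$ is recovered from this basis via the boundary conditions by the same annihilator computation.

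The main obstacle I expect is the genuinely non-operator nature of the problem. In the classical operator case one argues via the direct-sum decomposition $T^*=T+\cN_i+\cN_{-i}$ of graphs and manipulates elements directly, but for Hermitian subspaces the possibly non-trivial multi-valued part of $T^*$ blocks this, and one must work at the quotient level throughout. Verifying carefully that $[:]$ is non-degenerate on $T^*/T$ and that $T_1/T$ has dimension exactly $d$ (rather than merely $\leq d$) for any self-adjoint extension $T_1$ is the technically delicate step, and it is precisely where the hypothesis $d_+(T)=d_-(T)=d$ enters, via the Hermitian-subspace machinery already invoked for Lemma~2.1.
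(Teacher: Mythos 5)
The paper does not prove this lemma; it is quoted verbatim as \cite[Theorem 4.7]{Shi2012}, so there is no internal proof to compare against. Your outline is nonetheless a correct reconstruction of the standard argument, and it matches in substance the route taken in Shi's paper (which organizes the same content around the subspace analogue of the von Neumann decomposition $T^*=T\oplus\cN_i\oplus\cN_{-i}$ rather than working purely in the quotient). Two points deserve emphasis. First, essentially all of the genuinely subspace-theoretic work is concentrated in the single fact you invoke without proof, namely $\dim(T^*/T)=d_+(T)+d_-(T)$ for a \emph{closed} Hermitian subspace; this is Coddington's von Neumann formula for relations, and closedness of $T$ is also what gives $T^{**}=\overline{T}=T$ and hence the non-degeneracy of the descended form (the set $\{\gamma\in T^*:[\gamma:\delta]=0\ \forall\,\delta\in T^*\}$ is exactly $T^{**}\cap T^*=T$). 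If you are allowed to cite that formula, the rest is finite-dimensional linear algebra: a $d$-dimensional isotropic subspace of a $2d$-dimensional space carrying a non-degenerate skew-Hermitian form coincides with its annihilator (left and right annihilators agree by skew-Hermiticity), and $T_1^*/T$ is the annihilator of $T_1/T$ because $T\subset T_1$ forces $T_1^*\subset T^*$ and the defining condition of $T_1^*$ is precisely $[\gamma:\delta]=0$ for all $\delta\in T_1$. Second, in the necessity direction you should record explicitly that any self-adjoint extension $T_1$ of $T$ satisfies $T_1=T_1^*\subset T^*$ and is closed, so that it is indeed an intermediate subspace to which the quotient correspondence applies; with that noted, the argument is complete.
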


A  subspace $T$  is said to be bounded from below  if there exists a number $\gamma\in \R$ such that
\begin{equation}\label{bounded blow}
  \langle x',x\rangle\ge \gamma\|x\|^2,  \quad  \forall\; (x,x')\in T.
\end{equation}
The lower bound of $T$ is the largest number $\ga\in \R$ for which
(\ref{bounded blow}) holds.
It can be easily verified that $T$ is  Hermitian if it is bounded from below.

\begin{lemma}\label{lower bound def index equal}\cite[Proposition 1.4.6]{Behrndt2020}
Let $T$ be an Hermitian subspace and be bounded from below with lower bound $\gamma$. Then
$\C\setminus [\gamma,\infty) \subset \Gamma(T)$, and the deficiency index $d_{\ld}(T)$ is constant for all
$\ld\in \C\setminus [\gamma,\infty)$.
\end{lemma}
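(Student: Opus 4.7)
The plan is to verify the inclusion $\C \setminus [\gamma,\infty) \subset \Gamma(T)$ by producing an explicit positive constant $c(\lambda)$ in the definition of $\Gamma(T)$ for every such $\lambda$, and then to invoke \cite[Theorem 2.3]{Shi2012} to conclude that $d_\lambda(T)$ is constant on this connected set. The key preliminary observation is that since $T$ is Hermitian, for any $(x,x')\in T \subset T^*$ one has $\langle x',x\rangle = \langle x,x'\rangle$, so $\langle x',x\rangle \in \R$. This makes the expansion of $\|x'-\lambda x\|^2$ clean.

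I would split the argument into two cases. First, for nonreal $\lambda = a+ib$ with $b\neq 0$, I would expand
\begin{equation*}
\|x'-\lambda x\|^2 = \|x'\|^2 - 2a\langle x',x\rangle + (a^2+b^2)\|x\|^2 = \|x'-ax\|^2 + b^2\|x\|^2,
\end{equation*}
using the fact that $\langle x',x\rangle$ is real. This immediately yields $\|x'-\lambda x\|\ge |b|\,\|x\|$, so one can take $c(\lambda)=|{\rm Im}\,\lambda|>0$. Second, for real $\lambda=a<\gamma$, I would use Cauchy--Schwarz together with the lower bound assumption to obtain
\begin{equation*}
\|x'-ax\|\,\|x\| \ge |\langle x'-ax,x\rangle| = |\langle x',x\rangle - a\|x\|^2| \ge (\gamma-a)\|x\|^2,
\end{equation*}
since $\langle x',x\rangle \ge \gamma\|x\|^2 > a\|x\|^2$. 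If $x=0$ the inequality is trivial, and if $x\neq 0$ one divides by $\|x\|$ to get $\|x'-ax\|\ge (\gamma-a)\|x\|$, so $c(\lambda)=\gamma-a>0$ works. Together these two cases show $\C\setminus[\gamma,\infty)\subset \Gamma(T)$.

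For the second assertion, I would simply note that the complement $\C\setminus[\gamma,\infty)$ is connected (it is the plane minus a closed ray, hence path-connected), so it lies in a single connected component of $\Gamma(T)$. By \cite[Theorem 2.3]{Shi2012}, which was recalled just before the statement, the deficiency index $d_\lambda(T)$ is constant on every connected subset of $\Gamma(T)$, and the conclusion follows.

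There is no real obstacle here; the mildly delicate point is the real case $\lambda=a<\gamma$, where one must exploit the semiboundedness hypothesis through Cauchy--Schwarz rather than the trivial imaginary-part estimate. Everything else is a direct application of the results already cited in the excerpt.
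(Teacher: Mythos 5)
Your argument is correct, but note that the paper does not prove this lemma at all: it is quoted verbatim from \cite[Proposition 1.4.6]{Behrndt2020}, so there is no in-paper proof to compare against. What you have written is essentially the standard proof of that cited proposition. Both steps check out: for $\lambda=a+ib$ with $b\neq 0$ the identity $\|x'-\lambda x\|^2=\|x'-ax\|^2+b^2\|x\|^2$ is valid precisely because $\langle x',x\rangle$ is real for $(x,x')\in T\subset T^*$ (and the case $x=0$, including pairs $(0,x')$ in the multivalued part of $T$, is vacuous); for real $a<\gamma$ the Cauchy--Schwarz step correctly extracts $c(\lambda)=\gamma-a$ from the semiboundedness, which is indeed the only place the lower bound is used. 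The passage to constancy of $d_\lambda(T)$ via connectedness of $\C\setminus[\gamma,\infty)$ and \cite[Theorem 2.3]{Shi2012} is exactly how the paper intends the lemma to interface with the surrounding material, since that theorem is recalled immediately before the statement. In short: a complete and correct proof of a result the authors chose to import by citation.
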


\subsection{ Sesquilinear forms}

Let $\mathcal{X}$ be a complex Hilbert space and let $\mathcal{D}$ be a
linear subspace (not necessarily closed or dense) of $\mathcal{X}$.
A sesquilinear form  ${\bf t}[\cdot,\cdot]$ is a complex-valued mapping
from $\mathcal{D}\times \mathcal{D}$ to $\C$, which satisfies
\begin{align*}
 & {\bf t}[x,ay+bz]=\bar{a}\;{\bf t}[x,y]+\bar{b}\;{\bf t}[x,z], \\
 & {\bf t}[ay+bz,x]=a\;{\bf t}[y,x]+b\;{\bf t}[z,x],
\end{align*}
for all $x,y,z\in \mathcal{D}$ and $a,b\in \C$. The set $\mathcal{D}({\bf t})=\mathcal{D}$ is called the domain of ${\bf t}$.
Define the quadratic form ${\bf t}[\cdot]$ associated with $\bf t[\cdot,\cdot]$ as ${\bf t}[x]:={\bf t}[x,x]$
with the same domain $\mathcal{D}$.
${\bf t}$  is said to be sectorial with vertex at the origin and
semiangle $\af$, $\af\in [0,\pi/2)$, if
\begin{equation*}
  {\bf t_{i}}[x]\le \tan\af {\bf t_{r}}[x],\quad x\in \mathcal{D}({\bf t})=\mathcal{D}({\bf t_{i}})=\mathcal{D}({\bf t_{r}}),
\end{equation*}
where ${\bf t_r}$ and ${\bf t_i}$ stand for the real and imaginary parts of ${\bf t}$.
${\bf t}$ is said to be Hermitian if ${\bf t}[x]$ is real for all $x\in \mathcal{D}({\bf t})$.
It is clear that ${\bf t}$ is sectorial  if it is Hermitian.

A  sequence $\{x_n\}\subset\mathcal{D}(\bf  t)$ is said to be ${\bf t}$-convergent to $x\in \mathcal{X}$,
in symbol
\begin{equation*}\label{def closable}
  x_n  \stackrel{\bf t}{\longrightarrow}\;x,
\end{equation*}
if ${x_n}\to x$ in $\mathcal{X}$ and ${\bf  t}[x_n-x_m]\to 0$ as $n,m\to \infty$.
An Hermitian  sesquilinear form ${\bf t}$ is said to be closed if $\{x_n\} \stackrel{\bf t}{\longrightarrow}\;x$ implies that
$x\in \mathcal{D}(\bf  t)$ and ${\bf t}[x_{n}-x]\to 0$;
and ${\bf t}$  is said to be closable if it has a closed extension.

The following is a direct consequence of \cite[Chapter VI, Theorem 1.17]{Kato1984}.

\begin{lemma}\label{kato lemma}
An Hermitian  sesquilinear form ${\bf t}$ is  closable if and only if  $x_n  \stackrel{\bf t}{\longrightarrow}\;0$
implies $ {\bf t}[x_n]\to 0$ as $n\to \infty$.
When this condition is satisfied, ${\bf t}$ in $ \mathcal{X}$ has
the closure (the smallest closed extension) ${\bf \bar{t}}$ which defined in the following way.
\begin{align*}
& \mathcal{D}({\bf \bar{t}}):= \left\{x\in \mathcal{X}: \exists\;\{x_n\}\subset \mathcal{D}({\bf t})\;
{\rm s.t.}\; x_n  \stackrel{\bf t}{\longrightarrow}\;x\right\},\\
&{\bf \bar{t}}[x_1,x_2]:=\lim_{n\to \infty}{\bf t}[x_{1n},x_{2n}],\quad \forall
\;x_{jn}\stackrel{\bf t}{\longrightarrow}\;x_j,\quad j=1,2.
\end{align*}
\end{lemma}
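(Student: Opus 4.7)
The plan is to prove the two implications separately. The forward direction is almost immediate: suppose ${\bf t}$ has a closed extension $\tilde{\bf t}$ with $\mathcal{D}(\tilde{\bf t})\supset \mathcal{D}({\bf t})$, and let $\{x_n\}\subset \mathcal{D}({\bf t})$ satisfy $x_n \stackrel{\bf t}{\longrightarrow} 0$. Then $x_n\to 0$ in $\mathcal{X}$ and $\tilde{\bf t}[x_n-x_m]={\bf t}[x_n-x_m]\to 0$; since $0\in \mathcal{D}(\tilde{\bf t})$, the closedness of $\tilde{\bf t}$ applied with limit $0$ forces $\tilde{\bf t}[x_n]={\bf t}[x_n]\to 0$.

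For the reverse implication, assume the stated condition and define ${\bf \bar{t}}$ by the formula in the lemma. The core task is to show the defining limit exists and depends only on $x_1,x_2$. For existence, I would split
\[
{\bf t}[x_{1n},x_{2n}]-{\bf t}[x_{1m},x_{2m}]={\bf t}[x_{1n}-x_{1m},x_{2n}]+{\bf t}[x_{1m},x_{2n}-x_{2m}],
\]
polarize via the Hermitian identity, and estimate each piece by the Cauchy--Schwarz inequality attached to the form (after a non-negative shift, legitimate in the semibounded setting relevant for the Friedrichs construction). Independence of the approximating sequence then follows: if $\{x_{1n}\}$ and $\{x'_{1n}\}$ both ${\bf t}$-converge to the same $x_1$, the sequence $y_n=x_{1n}-x'_{1n}$ satisfies $y_n \stackrel{\bf t}{\longrightarrow} 0$, so the hypothesis yields ${\bf t}[y_n]\to 0$ and a further Cauchy--Schwarz estimate shows that the two candidate limits of ${\bf t}[x_{1n},x_{2n}]$ must coincide.

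Once ${\bf \bar{t}}$ is unambiguously defined, sesquilinearity and the Hermitian property transfer from ${\bf t}$ by continuity of the limit, and the constant sequence exhibits ${\bf \bar{t}}\supset {\bf t}$. It remains to verify closedness: given $\{y_n\}\subset \mathcal{D}({\bf \bar{t}})$ with $y_n \stackrel{{\bf \bar{t}}}{\longrightarrow} y$, choose for each $n$ an approximant $x_n\in \mathcal{D}({\bf t})$ with $\|y_n-x_n\|<1/n$ and ${\bf \bar{t}}[y_n-x_n]$ correspondingly small, and use a diagonal extraction to conclude $x_n\stackrel{\bf t}{\longrightarrow} y$, so that $y\in \mathcal{D}({\bf \bar{t}})$ and ${\bf \bar{t}}[y_n-y]\to 0$. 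Minimality of this closure is a triviality: any other closed extension must contain every ${\bf t}$-limit of a ${\bf t}$-Cauchy sequence and agree with ${\bf \bar{t}}$ on it. The step I expect to be the main obstacle is the Cauchy--Schwarz bound used in the existence of the defining limit: for an arbitrary Hermitian form no uniform such bound is available, so one must either exploit semiboundedness (as will be the case in the paper's subsequent Friedrichs construction, where a lower bound $\gamma$ makes ${\bf t}+(1-\gamma)\langle\cdot,\cdot\rangle$ non-negative and Cauchy--Schwarz becomes applicable) or bundle a sectorial assumption into the hypothesis as Kato does.
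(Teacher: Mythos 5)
The paper offers no proof of this lemma at all: it is imported verbatim as ``a direct consequence of \cite[Chapter VI, Theorem 1.17]{Kato1984}'', so the only meaningful comparison is with Kato's argument, which your reconstruction essentially reproduces. The forward implication from the definition of closedness applied with limit $0$, the existence and sequence-independence of the limit defining ${\bf \bar{t}}$ via the splitting of ${\bf t}[x_{1n},x_{2n}]-{\bf t}[x_{1m},x_{2m}]$ together with the Cauchy--Schwarz inequality for a non-negatively shifted form, and the closedness and minimality of ${\bf \bar{t}}$ by diagonal approximation are all sound. Your closing caveat is the one genuinely substantive point, and it is correct: for a Hermitian form with no lower bound the Cauchy--Schwarz estimate is unavailable, so the lemma as literally stated for arbitrary Hermitian forms does not follow from Kato's theorem, which assumes sectoriality (for symmetric forms, semiboundedness). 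The paper's claim that every Hermitian form is sectorial rests on its definition of sectoriality omitting the absolute value on ${\bf t_{i}}[x]$, which makes that condition vacuous for Hermitian forms rather than equivalent to boundedness below. Since the only form to which the lemma is applied, ${\bf t}_{\ld}$ in Theorem~\ref{bounded below}, is bounded from below, your argument covers every use the paper makes of the result; it would be cleaner to build semiboundedness into the hypothesis of the lemma.
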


The following result is a  direct consequence  of \cite[Lemma 4.2]{Hassi2017}.

\begin{lemma}\label{Hermite relation to form}
Let $T$ be a Hermitian relation in $\mathcal{X}^2$. The form ${\bf t}_{T}$ generated  by $T$ as the following
\begin{equation*}
{\bf t}_{T}[x_1,x_2]:=\langle x_1',x_2\rangle,\quad \forall\;(x_j,x_j')\in T,\quad j=1,2,
\end{equation*}
with $ \mathcal{D}(\mathbf{t})= \mathcal{D}(T)$ is well-defined, Hermitian  and closable.
\end{lemma}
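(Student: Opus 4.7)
Both rely on $T\subset T^*$. If $(x_1,x_1'),(x_1,x_1'')\in T$, then $(0,x_1'-x_1'')\in T\subset T^*$; applying the defining relation of $T^*$ to any $(y,y')\in T$ gives $\langle y,x_1'-x_1''\rangle=\langle y',0\rangle=0$, so by conjugation $\langle x_1',y\rangle=\langle x_1'',y\rangle$ for every $y\in\mathcal{D}(T)$, and ${\bf t}_T[x_1,x_2]$ does not depend on the choice of $x_1'$. Next, because $(x_1,x_1')\in T\subset T^*$ and $(x_2,x_2')\in T$, the adjoint relation yields $\langle x_2,x_1'\rangle=\langle x_2',x_1\rangle$, so ${\bf t}_T[x_1,x_2]=\langle x_1',x_2\rangle=\overline{\langle x_2',x_1\rangle}=\overline{{\bf t}_T[x_2,x_1]}$. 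In particular ${\bf t}_T[x]\in\mathbb{R}$ for every $x\in\mathcal{D}(T)$.

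\textbf{Closability.} By Lemma \ref{kato lemma} it suffices to show that $x_n\to 0$ in $\mathcal{X}$ together with ${\bf t}_T[x_n-x_m]\to 0$ forces ${\bf t}_T[x_n]\to 0$. For each $n$ pick $(x_n,x_n')\in T$ and set $c_n:={\bf t}_T[x_n]=\langle x_n',x_n\rangle\in\mathbb{R}$. Expanding via Hermiticity of the form yields
\begin{equation*}
{\bf t}_T[x_n-x_m]=c_n+c_m-2\,\mathrm{Re}\,\langle x_n',x_m\rangle,
\end{equation*}
and subtracting the instances with indices $(n,m)$ and $(n',m)$ cancels $c_m$:
\begin{equation*}
c_n-c_{n'}={\bf t}_T[x_n-x_m]-{\bf t}_T[x_{n'}-x_m]+2\,\mathrm{Re}\,\langle x_n'-x_{n'}',x_m\rangle.
\end{equation*}
Given $\varepsilon>0$, choose $N$ from the Cauchy condition on ${\bf t}_T[x_n-x_m]$; for $n,n'\geq N$ the first two terms on the right are each bounded by $\varepsilon$ uniformly in $m\geq N$, while the third term, with $n,n'$ fixed, tends to $0$ as $m\to\infty$ since $x_m\to 0$. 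Hence $|c_n-c_{n'}|\le 2\varepsilon$, so $c_n\to L$ for some $L\in\mathbb{R}$. Feeding $c_n,c_m\to L$ back into the first identity gives $\mathrm{Re}\,\langle x_n',x_m\rangle\to L$ as $n,m\to\infty$ jointly. Yet for each fixed $n$ the same expression tends to $0$ as $m\to\infty$ (since $x_n'$ is fixed and $x_m\to 0$); since a joint limit, when it exists, must agree with every iterated limit, $L=0$, so ${\bf t}_T[x_n]\to 0$ as required.

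\textbf{Main obstacle.} I expect the delicate step to be the final inference $L=0$: the hypothesis only furnishes a \emph{joint} Cauchy condition on ${\bf t}_T[x_n-x_m]$, and an arbitrary Hermitian sesquilinear form need not be closable. What rescues the argument is the concrete representation ${\bf t}_T[x_1,x_2]=\langle x_1',x_2\rangle$: by Cauchy--Schwarz this form is continuous in its second slot for each fixed first argument, and that continuity is precisely what allows the joint and iterated limits to be matched. Everything else (well-definedness, Hermiticity, the Cauchy property of $(c_n)$) is a routine bookkeeping of the inclusion $T\subset T^*$ combined with Kato's criterion recalled as Lemma \ref{kato lemma}.
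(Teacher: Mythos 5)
Your proof is correct. Note, however, that the paper does not actually prove this lemma at all: it simply records it as ``a direct consequence of \cite[Lemma 4.2]{Hassi2017}'', where the closability of the form generated by a (sectorial, in particular Hermitian) relation is established within the general theory of sectorial relations and their extremal extensions. What you have supplied is a self-contained elementary replacement: well-definedness and symmetry are extracted directly from the inclusion $T\subset T^*$ (the observation that $(0,x_1'-x_1'')\in T^*$ forces $x_1'-x_1''\perp\mathcal{D}(T)$ is exactly the right mechanism), and closability is verified against the criterion of Lemma \ref{kato lemma} by showing the diagonal values $c_n=\langle x_n',x_n\rangle$ form a Cauchy sequence and then identifying the limit as $0$ via the Cauchy--Schwarz continuity of $\langle x_n',\cdot\rangle$, which reconciles the joint limit of $\mathrm{Re}\,\langle x_n',x_m\rangle$ with its iterated limits. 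All the algebraic identities you use (the polarization-type expansion of ${\bf t}_T[x_n-x_m]$ and the cancellation of $c_m$) check out, and your closing remark correctly isolates where Hermiticity alone would be insufficient and the concrete representation of the form is needed. Your argument is more elementary and avoids importing the machinery of \cite{Hassi2017}, at the cost of some bookkeeping; the paper's citation buys brevity and situates the lemma in the broader framework (sectorial relations, Friedrichs and Krein--von Neumann extremal extensions) that is used again in Lemma \ref{existence F extension}.
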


The following result is a direct consequence of \cite[Proposition 2.8, Theorem 4.3]{Hassi2017}.

\begin{lemma}\label{existence F extension}
Let ${\bf t}$ be a closed and  Hermitian form in $\mathcal{X}$ with lower bound $\gamma>0$.
Then there exists a unique self-adjoint linear relation  $T_{\bf t}$ with the same lower bound $\gamma>0$ such that
\begin{equation}\label{form and relation 1}
\mathcal{D}(T_{\bf t})\subset \mathcal{D}(\bf t)
\end{equation}
and  for every $(x,x')\in T_{\bf t}$ and $y\in \mathcal{D}(\bf t)$ one has
\begin{equation}\label{form and relation 2}
{\bf t}[x,y]=\langle x',y\rangle.
\end{equation}
Conversely, for every self-adjoint relation  $T_{\bf t}$ with the lower bound $\gamma>0$, there exists a unique
closed and  Hermitian form  ${\bf t}$ in $\mathcal{X}$ such that $(\ref{form and relation 1})$ and $(\ref{form and relation 2})$ are satisfied.
\end{lemma}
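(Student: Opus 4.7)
The plan is to prove the two directions separately, invoking the form representation theorem of Kato adapted to Hermitian linear relations. For the forward direction, assume $\mathbf{t}$ is closed, Hermitian, and bounded below by $\gamma > 0$. Since $\mathbf{t}[x] \geq \gamma\|x\|^2$, the form itself defines an inner product on $\mathcal{D}(\mathbf{t})$, and closedness of $\mathbf{t}$ means exactly that the auxiliary space $\mathcal{H}_{\mathbf{t}} := (\mathcal{D}(\mathbf{t}), \mathbf{t}[\cdot,\cdot])$ is complete, hence a Hilbert space continuously embedded in $\mathcal{X}$. For each $x' \in \mathcal{X}$, the map $y \mapsto \langle x', y\rangle$ is a bounded conjugate-linear functional on $\mathcal{H}_{\mathbf{t}}$, so Riesz representation yields a unique $x \in \mathcal{D}(\mathbf{t})$ with $\mathbf{t}[x,y] = \langle x', y\rangle$ for all $y \in \mathcal{D}(\mathbf{t})$. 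Setting
\begin{equation*}
T_{\mathbf{t}} := \{(x,x') \in \mathcal{X}^2 : x \in \mathcal{D}(\mathbf{t}),\; \mathbf{t}[x,y] = \langle x', y\rangle \text{ for all } y \in \mathcal{D}(\mathbf{t})\},
\end{equation*}
I would verify Hermiticity from the Hermiticity of $\mathbf{t}$, the lower bound by taking $y = x$, and self-adjointness from the surjectivity produced by Riesz together with the positive lower bound (a Hermitian relation whose range is all of $\mathcal{X}$ and which is bounded below by $\gamma > 0$ must be self-adjoint).

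For the converse, given a self-adjoint relation $T_{\mathbf{t}}$ with lower bound $\gamma > 0$, I would use the canonical decomposition of a self-adjoint relation into an operator part $T_s$ acting on $\overline{\mathcal{D}(T_{\mathbf{t}})}$ and a purely multi-valued part living in the orthogonal complement. The operator $T_s$ is self-adjoint on the closed subspace $\overline{\mathcal{D}(T_{\mathbf{t}})}$ with the same lower bound $\gamma > 0$, so the spectral calculus produces a positive self-adjoint square root $T_s^{1/2}$; I would set $\mathcal{D}(\mathbf{t}) := \mathcal{D}(T_s^{1/2})$ and $\mathbf{t}[x,y] := \langle T_s^{1/2} x, T_s^{1/2} y\rangle$. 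The resulting form is closed, Hermitian, has lower bound $\gamma$, and satisfies the identity $\mathbf{t}[x,y] = \langle x', y\rangle$ on the required domain by a standard density argument exploiting that $\mathcal{D}(T_s)$ is a form core of $\mathbf{t}$. Uniqueness in both directions follows by polarization from the representation identity.

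The main obstacle will be the bookkeeping for the multi-valued component: the form sees only the operator part $T_s$, so on the converse side I must argue that $T_{\mathbf{t}}(0)$ lives in the orthogonal complement of $\overline{\mathcal{D}(T_{\mathbf{t}})}$ (which uses self-adjointness), and on the forward side I must check that the relation $T_{\mathbf{t}}$ I construct does not spuriously acquire a multi-valued component, i.e.\ that $T_{\mathbf{t}}(0) = \{0\}$ (which follows from $\gamma > 0$: if $(0, x') \in T_{\mathbf{t}}$ then $\langle x', y\rangle = 0$ for all $y \in \mathcal{D}(\mathbf{t})$, forcing $x' = 0$ once one knows $\mathcal{D}(\mathbf{t})$ is dense, which itself follows from the positive lower bound and the Riesz construction). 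Since the excerpt flags the lemma as a direct consequence of Proposition 2.8 and Theorem 4.3 of \cite{Hassi2017}, the official write-up is essentially a one-line citation; the conceptual content is Kato's First Representation Theorem transplanted into the setting of Hermitian linear relations.
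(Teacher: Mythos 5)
The paper offers no proof of this lemma at all---it is dispatched with a citation to \cite{Hassi2017}---so your sketch is necessarily a different (and more informative) route: the Riesz representation argument on the form Hilbert space $\mathcal{H}_{\mathbf{t}}=(\mathcal{D}(\mathbf{t}),\mathbf{t}[\cdot,\cdot])$ for the forward direction, and the operator-part decomposition together with $T_s^{1/2}$ for the converse, which is precisely the first representation theorem for semibounded relations that the cited results encode. The skeleton is sound: completeness of $\mathcal{H}_{\mathbf{t}}$ is equivalent to closedness once $\mathbf{t}[x]\ge\gamma\|x\|^2$ with $\gamma>0$, Riesz gives surjectivity of $\mathcal{R}(T_{\mathbf{t}})$, and a Hermitian relation with full range and positive lower bound is self-adjoint.

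There is, however, one genuine conceptual error, and it sits exactly at the point this paper cares about. You claim that $\mathcal{D}(\mathbf{t})$ is dense (``follows from the positive lower bound and the Riesz construction'') and hence that $T_{\mathbf{t}}(0)=\{0\}$. Neither holds: a positive lower bound says nothing about density (take $\mathcal{X}=\mathbb{C}^2$, $\mathcal{D}(\mathbf{t})=\mathbb{C}\times\{0\}$, $\mathbf{t}[x,y]=x_1\overline{y_1}$), and the form $\mathbf{t}_{\lambda}$ to which the lemma is applied in Section 3 has domain $\mathcal{R}(H_{00}-\lambda I)$, which is not dense in general---this is the entire reason the paper works with relations rather than operators. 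When $\mathcal{D}(\mathbf{t})$ is not dense, your own definition of $T_{\mathbf{t}}$ forces $(0,x')\in T_{\mathbf{t}}$ for every $x'\perp\mathcal{D}(\mathbf{t})$, so $T_{\mathbf{t}}(0)=\overline{\mathcal{D}(\mathbf{t})}^{\perp}\neq\{0\}$; and far from being spurious, this multi-valued part is mandatory, since any self-adjoint relation satisfies $T(0)=\overline{\mathcal{D}(T)}^{\perp}$. Fortunately nothing else in your argument uses single-valuedness---Hermiticity, the lower bound, surjectivity, and the self-adjointness criterion all go through verbatim---so the proof survives once that parenthetical is deleted rather than ``checked''. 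Two smaller points to tighten: you should justify that the lower bound of $T_{\mathbf{t}}$ equals $\gamma$ exactly (not merely is at least $\gamma$), which requires $\mathcal{D}(T_{\mathbf{t}})$ to be a core of $\mathbf{t}$; and in the converse direction the identity $\langle x',y\rangle=\langle T_sx,y\rangle$ for $y\in\mathcal{D}(\mathbf{t})$ should explicitly invoke $x'-T_sx\in T(0)\perp\overline{\mathcal{D}(T)}$, which you correctly flag but should carry through.
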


\subsection{Several difference equations related to (1.1) }

System (1.1) contains the following two important models. One is the
 formally self-adjoint scalar difference equation with complex coefficients
\begin{align}\label{2nth equation}
\sum_{j=0}^{n}(-1)^j\Delta^j(p_j\nb^jz(t))+i\sum_{k=1}^{n}[(-1)^{k+1}\Delta^k(q_kz(t))+q_k\nb^kz(t)]=\ld w(t)z(t),
\end{align}
where $p_j(t)$ and $q_k(t)$ are all real-valued, $p_n(t) \neq 0$ on $\I$, and $i=\sqrt{-1}$.
In fact, by letting
$u=(u_1,u_2,\ldots,u_{n})^T$ and $v=(v_1,v_2,\ldots,v_{n})^T$ with
\begin{eqnarray*}
         &&u_j(t)=\Delta^{j-1}z(t-j),\\
        && v_{j}(t)=\sum_{k=j}^{n}(-1)^{k+j}\Delta^{k-j}(p_k(t)\nabla^kz(t))+i\sum_{k=j}^{n}(-1)^{k+1}\Delta^{k-j}(q_k(t)z(t)),
     \end{eqnarray*}
for $j=1,2,\ldots,n$, we can convert (\ref{2nth equation})  into  (1.1) with
\begin{eqnarray*}
&&A(t)=\begin{pmatrix}0 & I_{n-1}\\ iq_{n}(t)/p_{n}(t)
&0\end{pmatrix},\quad
C(t)=\begin{pmatrix} p_0(t)+q_{n}(t)/p_{n}(t)& \alpha(t)
\\\alpha^*(t),
&\bt(t)\end{pmatrix},
\end{eqnarray*}
and
\begin{align*}
& B(t)= {\rm diag}\{0,\ldots,0,p_n^{-1}(t)\},\\
&\alpha(t)=i(q_{n-1}(t),q_{n-2}(t),\ldots,q_1(t))^T, \\
&\bt(t)={\rm diag}\{p_1(t),p_2(t),\ldots,p_{n-1}(t)\},\\
&W(t)= {\rm diag}\{w(t),0,\ldots,0\}.
\end{align*}
The other is the   second-order vector Sturm-Liouville difference equation
\begin{equation}\label{Sturm-Liouville equation}
  -\nabla (P(t)\Delta u(t))+Q(t)u(t)=\ld W(t)u(t),\quad t\in \I,
\end{equation}
where $P(t)$, $Q(t)$  and $W(t)$ are  $n\times n$ Hermitian matrices,
$W(t)\ge 0$ and $P(t)>0$. In fact, by letting $v(t)=P(t)\De u(t)$, we can convert (\ref{Sturm-Liouville equation})
into the following form
\begin{eqnarray*}
\left\{ \begin{array}{l}
\De u(t)=P^{-1}(t)v(t),\\
\De v(t)=(Q(t+1)-\ld W(t+1))u(t+1),
\end{array}\right. \quad t\in \I.
\end{eqnarray*}

The general singular discrete linear Hamiltonian system is in the form
\begin{align}\label{HS2}
    J\Delta y(t)=(\tilde{P}(t)+\ld \tilde{W}(t))R(y)(t),\quad t \in \I,
\end{align}
where $\tilde{W}(t)\ge 0$ and $\tilde{P}(t)$ are $2n\times 2n$ Hermitian matrices;
$J$ is the $2n\times 2n$ canonical symplectic matrix, that is,
\begin{equation*}
J=\left(\begin{array}{cc} 0&-I_n\\I_n&0\end{array}\right),
\end{equation*}
$y(t)=(u^T(t), v^T(t))^T$ with $u(t),v(t)\in \C^n$
and   $R(y)(t)$ is the partial right shift operator
\begin{equation*}
  R(y)(t)=\left(
               \begin{array}{c}
                 u(t+1)\\
                v(t) \\
               \end{array}
             \right).
\end{equation*}
System (1.1) is a special case of (\ref{HS2}) with
\begin{equation*}
  \tilde{W}(t)={\rm diag}\{W(t),0\}.
\end{equation*}

Since $I_n-A(t)$ is invertible, system (1.1) is equivalent to  the following system
\begin{eqnarray}\label{SS}
\left\{\begin{array}{l}
         u(t+1)=\tilde{A}(t)u(t)+\tilde{A}(t)B(t)v(t),\\
         v(t+1)=\tilde{C}(t,\ld)\tilde{A}(t)u(t)+D(t,\ld)v(t),\quad t\in \I,
       \end{array}\right.
\end{eqnarray}
where
\begin{eqnarray}\label{ABC}
\begin{array}{lll}
&\tilde{A}(t)=(I_n-A(t))^{-1},\quad \tilde{C}(t,\ld)=C(t)-\ld W(t),\\
&D(t,\ld)=\tilde{C}(t,\ld)\tilde{A}(t)B(t)+I_n-A^*(t).
\end{array}
\end{eqnarray}
For any  $\ld\in \R$, it can be easily verified that
\begin{eqnarray*}\label{symplitic condition}
\left(
  \begin{array}{cc}
  \tilde{A}(t)&\tilde{A}(t)B(t)\\
  \tilde{C}(t,\ld)\tilde{A}(t)&D(t,\ld) \\
    \end{array}
\right)^*J\left(
  \begin{array}{cc}
  \tilde{A}(t)&\tilde{A}(t)B(t)\\
  \tilde{C}(t,\ld)\tilde{A}(t)&D(t,\ld) \\
    \end{array}
\right)\equiv J.
\end{eqnarray*}
This implies  that (\ref{SS}), as well as (1.1), is  a symplectic system when $\ld\in \R$.

\section{ The existence of the recessive solutions and  the Friedrichs extension}

In the first subsection,  some variational properties of  the solutions of  $(1.1)$ are recalled,
and then the existence  of the recessive  solutions of $(\ref{HS1})$ is proved.
In the second subsection, by introducing  a  quadratic form associated with (1.1)
we show the existence  of the Friedrichs extension of the minimal subspace generated by (1.1).
A characterization of the recessive solutions is given in the last subsection.

\subsection{Existence  of the recessive  solutions}

Denote
\begin{align*}
l(\I):=\{y:\;y=\{y(t)\}_{t\in\I},\; y(t)\in \C^{2n}\}.
\end{align*}
For any two  $y_1,y_2\in l(\I)$, denote
\begin{equation*}
(y_1,y_2)(t):=y_2^*(t)Jy_1(t),
\end{equation*}
where  $J$ is the canonical symplectic matrix.
If the limit
\begin{equation*}
\lim_{t\to \infty}(y_1,y_2)(t)
\end{equation*}
exists and is finite, then
its limit is denoted by $(y_1,y_2)(\infty)$.
The natural difference operator corresponding to system $(1.1)$  is denoted by
\begin{align}\label{def L}
\mathcal{L}(y)(t)&:=\left(\ba{l}(I_n-A^*(t))v(t)-v(t+1)+C(t)u(t+1)\\
(I_n-A(t))u(t+1)-u(t)-B(t)v(t)\ea\right),\quad y\in l(\I).
\end{align}

\begin{lemma}\label{Liouville formulae}  \cite[Lemma 2.1]{Shi2006}
 For any two $y_1, y_2\in l(\I)$ and any $s,k\in\I$ with $s<k$,
\begin{align*}
    \sum_{t=s}^{k}[R(y_2)^*(t)\mathcal{L}(y_1)(t)- \mathcal{L}(y_2)^*(t)R(y_1)(t)]=(y_1,y_2)(k+1)-(y_1,y_2)(s).
\end{align*}
\end{lemma}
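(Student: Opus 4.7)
The plan is to treat the identity as a discrete Green's formula for the Hamiltonian difference operator $\cL$. I would rewrite $\cL$ as a skew-symmetric difference part plus a Hermitian multiplication, cancel the Hermitian part from the integrand on the left-hand side, recognize the remainder as a forward difference of $(y_1,y_2)(t)$, and then telescope the sum.

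First I would decompose
\begin{equation*}
\cL(y)(t) = J\,\De y(t) + P(t)\,R(y)(t), \qquad P(t) := \begin{pmatrix} C(t) & -A^*(t) \\ -A(t) & -B(t) \end{pmatrix},
\end{equation*}
which follows directly from the definition of $\cL$ above together with the explicit form of $J$; the block matrix $P(t)$ is Hermitian because $B(t)$ and $C(t)$ are. Substituting this decomposition into $R(y_2)^*\cL(y_1) - \cL(y_2)^*R(y_1)$ produces four terms; the two involving $P$ cancel thanks to $P^*=P$, and (using $J^* = -J$) the remaining two combine into
\begin{equation*}
R(y_2)^*(t)\,J\,\De y_1(t) + \De y_2^*(t)\,J\,R(y_1)(t).
\end{equation*}

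Next I would verify the pointwise discrete Leibniz identity
\begin{equation*}
\De\bigl(y_2^*(t)\,J\,y_1(t)\bigr) = R(y_2)^*(t)\,J\,\De y_1(t) + \De y_2^*(t)\,J\,R(y_1)(t)
\end{equation*}
by a blockwise expansion: writing $y_i = (u_i^T, v_i^T)^T$, the left-hand side equals
\begin{equation*}
-u_2^*(t+1)v_1(t+1) + v_2^*(t+1)u_1(t+1) + u_2^*(t)v_1(t) - v_2^*(t)u_1(t),
\end{equation*}
and the various cross-terms generated on the right cancel in pairs precisely because the half-shift built into $R(y)(t) = (u(t+1)^T, v(t)^T)^T$ is calibrated to the off-diagonal structure of $J$. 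This bookkeeping is the only delicate point; the rest is algebraic cancellation.

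Finally, summing the resulting pointwise identity over $t = s, s+1, \dots, k$ telescopes to $(y_1,y_2)(k+1) - (y_1,y_2)(s)$, which is the desired conclusion. The main obstacle is merely keeping the time indices straight in the Leibniz step; there is no analytic difficulty, and the whole argument is, in essence, a discrete integration by parts adapted to the Hamiltonian block structure.
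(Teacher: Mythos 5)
Your proof is correct. The paper does not prove this lemma at all---it is imported from \cite[Lemma 2.1]{Shi2006}---but your argument is the standard one and checks out completely: the decomposition $\mathcal{L}(y)(t)=J\Delta y(t)+P(t)R(y)(t)$ with $P(t)$ Hermitian is exactly what the definition of $\mathcal{L}$ gives, the discrete Leibniz identity $\Delta\bigl(y_2^*(t)Jy_1(t)\bigr)=R(y_2)^*(t)J\Delta y_1(t)+\Delta y_2^*(t)JR(y_1)(t)$ holds by the blockwise cancellation you describe (the half-shift in $R$ is indeed what makes the cross-terms cancel against the off-diagonal $J$), and the telescoping sum yields $(y_1,y_2)(k+1)-(y_1,y_2)(s)$ with the sign conventions of the paper's definition $(y_1,y_2)(t)=y_2^*(t)Jy_1(t)$.
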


By small letters $y=(u;v)$
we denote a  vector-valued solution of (1.1) and by capital letter $Y=(U;V)$ denote a $2n\times n$ matrix-valued solution of (1.1).
Let $y_1$ and $y_2$ be two solutions of (1.1). It follows from
Lemma \ref{Liouville formulae} that
\begin{align*}
(y_1,y_2)(t)\equiv (y_1,y_2)(0)=c, \quad t\in \I.
\end{align*}
where $c$ is a constant. Further, we call $y$   a {\it prepared (or conjoined) solution} if
\begin{equation*}
(y,y)(t)\equiv 0.
\end{equation*}
Similarly,  $Y(t)=(U(t);V(t))$ is called  a {\it prepared (or conjoined) solution} if
\begin{equation*}
  Y^*(t)JY(t)\equiv0.
\end{equation*}
It follows from Lemma \ref{Liouville formulae} that $Y(t)$ is  a  prepared   solution of (1.1) if and only if
$U^*(t)V(t)$ is Hermitian for some $t\in \I$.

As it has been shown in Section 2.3, system (1.1) can be converted into (\ref{SS}), which is a symplectic system when $\ld\in \R$.
So,  the results in  \cite[Section 3]{Ahlbrandt-Peterson1996} can be applied to system (1.1) when $\ld\in\R$.
Note that   $\tilde{A}(t)=(I_n-A(t))^{-1}$ in (\ref{ABC}).

\begin{lemma}\label{construct conjoined solution} \cite[Theorems 3.32, 3.33]{Ahlbrandt-Peterson1996}
Let  $\ld\in \R$, and  $Y_0=(U_0;V_0)$ be a  prepared solution of $(1.1)$ such that
$U_0(t)$ is invertible for  $t\ge t_1\ge 0$. Then
\begin{align*}
  D(t)=U_0^{-1}(t+1)\tilde{A}(t)B(t)(U_0^{-1})^*(t),\quad t\ge t_1
\end{align*}
is Hermitian, and consequently
\begin{align*}
 S_0(t_1):=0,\quad  S_0(t):=\sum_{s=t_1}^{t-1}U_0^{-1}(s+1)\tilde{A}(s)B(s)(U_0^{-1})^*(s), \quad t\ge t_1+1
\end{align*}
is Hermitian for $t\ge t_1$.
Let $Y=(U;V)$ be another   solution of $(1.1)$. Then it can be expressed as
\begin{eqnarray}\label{def UV}
\left\{\begin{array}{ll}
         U(t)=U_0(t)(P+S_0(t)Q),&\\
         V(t)=V_0(t)(P+S_0(t)Q)+(U_0^{-1})^*(t)Q,\quad t\ge t_1,
       \end{array}    \right.
\end{eqnarray}
where
\begin{align}\label{def PQ}
 P=U_0^{-1}(t_1)U(t_1),\quad  Q=U_0^*(t)V(t)-V_0^*(t)U(t)
\end{align}
are  constant matrices.
Conversely, let $P$ and $Q$ be constant $n\times n$ matrices and $U(t)$ and $V(t)$ be defined by $(\ref{def UV})$. Then
$Y(t)=(U;V)$ is a solution of $(1.1)$ and therefore $(\ref{def PQ})$ holds.
Further, $Y=(U;V)$ is a prepared solution of $(1.1)$
if and only if $P^*Q$ is Hermitian.
\end{lemma}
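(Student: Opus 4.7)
The proof proposal follows the classical reduction-of-order template for symplectic difference systems. Since $\ld\in\R$, system (1.1) is symplectic (see Section 2.3), and Lemma~\ref{Liouville formulae} applied to the matrix solutions $Y_0,Y$ (with $\mathcal L(Y_0)=\mathcal L(Y)=0$) produces the Wronskian-type identity that $V_0^*(t)U(t)-U_0^*(t)V(t)$ is constant on $\I$. This conserved quantity and a reduction ansatz $U=U_0R$ are the two pillars of the argument.

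First I would establish the Hermiticity of $D(t)$ and $S_0(t)$. From the matrix form of (\ref{SS}) applied to $Y_0$ one has $U_0(t+1)=\tilde A(t)U_0(t)+\tilde A(t)B(t)V_0(t)$. Conjugating $D(t)-D^*(t)$ by $U_0(t+1)$ on the left and $U_0^*(t+1)$ on the right, substituting this recurrence together with $B^*=B$, and cancelling the common $\tilde A(t)B(t)\tilde A^*(t)$ terms, collapses the expression to $\tilde A(t)B(t)\bigl[(U_0^{-1})^*(t)V_0^*(t)-V_0(t)U_0^{-1}(t)\bigr]B(t)\tilde A^*(t)$; the prepared condition $U_0^*V_0=V_0^*U_0$ is equivalent to $V_0U_0^{-1}=(U_0^{-1})^*V_0^*$, killing the bracket. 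Hermiticity of $S_0(t)$ then follows as a sum of Hermitian matrices.

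Next I would establish (\ref{def UV}) for an arbitrary solution $Y=(U;V)$. The Wronskian identity tells me that $Q:=U_0^*V-V_0^*U$ is a constant $n\times n$ matrix; setting $R(t):=U_0^{-1}(t)U(t)$, the prepared condition $U_0^*V_0=V_0^*U_0$ permits me to solve $Q=U_0^*V-V_0^*U_0R$ for $V$, giving $V=V_0R+(U_0^{-1})^*Q$. Substituting $U=U_0R$ and this expression for $V$ into the first line of (\ref{SS}) and collapsing the $R(t)$-terms via the recurrence for $U_0(t+1)$ leaves $R(t+1)-R(t)=D(t)Q$; telescoping from $t_1$ with $R(t_1)=P:=U_0^{-1}(t_1)U(t_1)$ yields $R(t)=P+S_0(t)Q$, which is exactly (\ref{def UV}). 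For the converse, given $P,Q\in\C^{n\times n}$ and $(U;V)$ defined by (\ref{def UV}), at $t=t_1$ (where $S_0(t_1)=0$) one has $U(t_1)=U_0(t_1)P$ and $V(t_1)=V_0(t_1)P+(U_0^{-1})^*(t_1)Q$; applying the forward direction to the unique solution $\widetilde Y$ of (1.1) with initial data $(U(t_1);V(t_1))$ recovers the same constants $P,Q$ and hence the same formulas, so $(U;V)=\widetilde Y$ is a solution. Finally, the prepared criterion comes from the expansion $U^*V=(P+S_0Q)^*U_0^*V_0(P+S_0Q)+(P+S_0Q)^*Q$: the first summand is Hermitian (as $U_0^*V_0$ is), $Q^*S_0Q$ is Hermitian (as $S_0$ is), so $U^*V$ is Hermitian if and only if $P^*Q$ is.

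The main obstacle is the Hermiticity of $D(t)$: it is the one step where the prepared hypothesis on $Y_0$ is genuinely used in a nontrivial matrix identity, and it is what licenses the telescoping that produces $S_0(t)Q$ in (\ref{def UV}). Everything afterward is algebraic bookkeeping anchored at $t_1$, with the Wronskian identity supplying the conserved constant $Q$ and the initial-value argument delivering the converse without a second heavy computation.
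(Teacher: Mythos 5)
Your argument is correct and is essentially the standard reduction-of-order proof for this result; the paper itself gives no proof, quoting the lemma directly from Ahlbrandt--Peterson (Theorems 3.32, 3.33), and your computation (Hermiticity of $D(t)$ via the prepared condition $V_0U_0^{-1}=(U_0^{-1})^*V_0^*$, the conserved Wronskian $Q$, the telescoped recurrence $R(t+1)-R(t)=D(t)Q$, and the expansion of $U^*V$ for the prepared criterion) reproduces that textbook argument faithfully. Nothing to add.
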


Let $Y=(U;V)$ be  a  prepared solution  of (1.1).
It is  said to be a {\it  dominant solution}
provided that there exists an integer $t_1 \in \I$ such that $U(t)$ is invertible for $t\ge t_1$  and
\begin{align*}
 \sum_{s=t_1}^{\infty}U^{-1}(s+1)\tilde{A}(s)B(s)(U^{-1})^*(s)
\end{align*}
converges to a Hermitian matrix with finite entries.
We call  $Y(t)$  a {\it recessive (or principal) solution}
 provided that whenever  $Y_1=(U_1;V_1)$ is a solution of (1.1) satisfying
\begin{equation*}
 Y^*(t)JY_1(t)\equiv C, \quad t\in \I,
\end{equation*}
where $C$ is a non-singular matrix,  there exists  $t_1\in \I$ such that $U_1(t)$ is non-singular for $t\ge t_1$ and
\begin{equation*}
\lim_{t\to \infty} U_1^{-1}(t)U(t)=0.
\end{equation*}
In this case, $y_1(t), y_2(t),\ldots,y_n(t)$, which are the  column of $Y(t)$,  are called the recessive  solutions of (1.1).

\begin{lemma} \label{dominant to recessive solution} \cite[Theorems 3.50]{Ahlbrandt-Peterson1996}
Let  $\ld\in \R$, and  $Y=(U;V)$  be a dominant solution of $(1.1)$ with  $U(t)$  invertible for $t\ge t_1\ge 0$.
Define  $Y_0=(U_0;V_0)$  by
\begin{align*}
\left\{
 \begin{array}{ll}
   U_0(t)=U(t)S(t),\\
   V_0(t)=V(t)S(t)-(U^{-1})^*(t),& t\ge t_1,
 \end{array}\right.
\end{align*}
where
\begin{equation*}
 S(t)=\sum_{s=t}^{\infty}U^{-1}(s+1)\tilde{A}(s)B(s)(U^{-1})^*(s),\quad t\ge t_1,
\end{equation*}
Then $Y_0=(U_0;V_0)$ is a recessive solution of $(1.1)$.
\end{lemma}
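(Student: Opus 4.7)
The plan is to exhibit $Y_0$ as a specific instance of the parametrisation in Lemma~\ref{construct conjoined solution}, and then to compare every ``competing'' solution $Y_1$ to $Y_0$ through the same parametrisation. The essential bookkeeping ingredient throughout will be the telescoping identity
\[
S_0(t)+S(t)\equiv S(t_1)=:T,
\]
together with the fact that $T$ is Hermitian (its summands are Hermitian by Lemma~\ref{construct conjoined solution}, since $Y$ is prepared) and the fact that $S(t)\to 0$ as $t\to\infty$ by the very definition of a dominant solution.

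First, I would take $Y$ itself as the base prepared solution in Lemma~\ref{construct conjoined solution} (legitimate because $U$ is invertible on $[t_1,\infty)$) and choose the constant parameters $P=T$, $Q=-I_n$. Then $P+S_0(t)Q=T-S_0(t)=S(t)$, so the formulas of that lemma collapse to exactly $U_0=US$ and $V_0=VS-(U^{-1})^{*}$; moreover $P^{*}Q=-T$ is Hermitian, so $Y_0$ is a prepared solution of (1.1). Next, let $Y_1=(U_1;V_1)$ be any solution with $Y_0^{*}JY_1\equiv C$ non-singular. Parametrise it in the same way via Lemma~\ref{construct conjoined solution}: there exist constants $P_1,Q_1$ with $U_1=U(P_1+S_0Q_1)$ and $V_1=V(P_1+S_0Q_1)+(U^{-1})^{*}Q_1$. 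A direct substitution, using the Hermiticity of $S$ and the prepared identity $V^{*}U=U^{*}V$, yields
\[
C=V_0^{*}U_1-U_0^{*}V_1=S(V^{*}U_1-U^{*}V_1)-U^{-1}U_1=-SQ_1-(P_1+S_0Q_1)=-P_1-TQ_1,
\]
so $P_1+TQ_1=-C$ is invertible. Since $S_0(t)\to T$, the matrix $P_1+S_0(t)Q_1$ is invertible for all sufficiently large $t$, whence $U_1(t)=U(t)(P_1+S_0(t)Q_1)$ is eventually invertible; and then
\[
U_1^{-1}(t)U_0(t)=(P_1+S_0(t)Q_1)^{-1}U^{-1}(t)U(t)S(t)=(P_1+S_0(t)Q_1)^{-1}S(t)\longrightarrow(-C)^{-1}\cdot 0=0,
\]
which is exactly the recessivity requirement.

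The main difficulty is purely algebraic: keeping the roles of the partial sum $S_0$, the tail $S$, and the total $T=S(t_1)$ cleanly separated and getting the signs right in the computation of $V_0^{*}U_1-U_0^{*}V_1$. Once the identity $S_0+S\equiv T$, the Hermiticity of $S$, and the prepared character of $Y$ are in hand, the proof reduces to a sequence of direct substitutions capped by the limit $S(t)\to 0$ furnished by the ``dominant'' hypothesis.
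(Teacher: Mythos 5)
The paper offers no proof of this lemma: it is quoted verbatim from \cite[Theorem 3.50]{Ahlbrandt-Peterson1996}, so there is no in-paper argument to compare against. Your proposal is a correct, self-contained proof, and it follows what is essentially the classical reduction-of-order route of the cited reference: realize $Y_0$ through the parametrisation of Lemma~\ref{construct conjoined solution} with $P=T:=S(t_1)$, $Q=-I_n$ (so that $P+S_0(t)Q=S(t)$ and $P^*Q=-T$ is Hermitian, making $Y_0$ prepared), then parametrise any competitor $Y_1$ by constants $P_1,Q_1$ and identify the constant $Y_0^*JY_1=-(P_1+TQ_1)$. All the individual steps check out: the identity $S_0+S\equiv T$, the cancellation of $S(V^*U-U^*V)$ via preparedness of $Y$, the eventual invertibility of $P_1+S_0(t)Q_1$ by continuity of the determinant, and the limit $U_1^{-1}U_0=(P_1+S_0Q_1)^{-1}S\to(-C)^{-1}\cdot 0=0$, which is precisely the paper's definition of a recessive solution. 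The only point worth making explicit in a written version is that $T$ is Hermitian because each summand $U^{-1}(s+1)\tilde A(s)B(s)(U^{-1})^*(s)$ is Hermitian by Lemma~\ref{construct conjoined solution}, which you do note; no gaps remain.
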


System $(1.1)$ is said to have {\it the  unique two point property} on  $\I$ with respect to some $\ld=\ld_0$ provided
that whenever $y_1=(u_1;v_1)$ and $y_2=(u_2;v_2)$ are solutions of (1.1) with  $u_1(t_1)=u_2(t_1)$ and $u_1(t_2)=u_2(t_2)$,
where $0\le t_1<t_2<\infty$, it follows that $y_1(t)\equiv y_2(t)$ on $\I$.
It follows from \cite[Thoerem 3.55]{Ahlbrandt-Peterson1996} that there exists a unique matrix-valued solution of (1.1) satisfying the
boundary conditions
\begin{equation*}
  U(t_1)=I_n,\quad U(t_2)=0, \quad 0\le t_1<t_2,
\end{equation*}
provided that (1.1) has the unique two point property on $\I$.

\begin{lemma} \label{Olver-Reid recessive solution}  \cite[Theorems 3.56]{Ahlbrandt-Peterson1996}
Let  $\ld\in \R$. Assume that system $(1.1)$ has  a recessive solution $Y_0=(U_0; V_0)$, which  satisfying $U_0(t_1)$
non-singular and $(1.1)$ has  the unique two point property on $[t_1,\infty)\cap \Z$.
Let $Y(t,s)=(U(t,s);V(t,s))$ be the solution of $(1.1)$,  which
satisfying the boundary conditions
\begin{equation*}
 U(t_1,s)=I_n, \quad  U(s,s)=0
\end{equation*}
Then $Y(t,s)\to Y_0(t)U_0^{-1}(t_1)$ as $s\to \infty$; that is, $Y(t,s)$ converges to
the recessive solution which satisfies the initial condition of $U(t_1)=I_n$.
\end{lemma}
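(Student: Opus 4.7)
The plan is to apply the representation formula (\ref{def UV}) from Lemma \ref{construct conjoined solution} to the family $Y(t,s)$, determine the constants from the two-point boundary conditions, and then pass to the limit $s\to\infty$, where the required decay is provided by the defining property of the recessive solution $Y_0$.

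First, I would verify that (\ref{def UV}) applies on $[t_1,\infty)\cap\mathbb{Z}$. The hypothesis that $U_0(t_1)$ is non-singular together with the unique two-point property should give, by a standard disconjugacy-type argument, that $U_0(t)$ is non-singular for every $t\ge t_1$, so the matrix $S_0(t)$ is well-defined. A central ingredient is the equivalent characterization of recessivity
\[ S_0^{-1}(s)\longrightarrow 0 \quad \text{as } s\to\infty, \]
which follows by pairing $Y_0$ against a dominant companion solution constructed as in Lemma \ref{dominant to recessive solution} and translating the defining convergence $U_1^{-1}(t)U_0(t)\to 0$ into a statement about $S_0$ via (\ref{def UV})--(\ref{def PQ}).

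Once this is in hand, I write
\[ U(t,s)=U_0(t)\bigl(P(s)+S_0(t)Q(s)\bigr),\qquad V(t,s)=V_0(t)\bigl(P(s)+S_0(t)Q(s)\bigr)+(U_0^{-1})^*(t)Q(s), \]
with constants $P(s),Q(s)\in\mathbb{C}^{n\times n}$ to be determined from the boundary data. Since $S_0(t_1)=0$, the condition $U(t_1,s)=I_n$ yields $P(s)=U_0^{-1}(t_1)$; the condition $U(s,s)=0$, combined with the invertibility of $U_0(s)$, forces $P(s)+S_0(s)Q(s)=0$, and hence $Q(s)=-S_0^{-1}(s)U_0^{-1}(t_1)$ for all sufficiently large $s$.

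Letting $s\to\infty$, the recessivity input gives $Q(s)\to 0$ while $P(s)$ is independent of $s$; for any fixed $t$, the product $S_0(t)Q(s)\to 0$ as well. Substituting back into the representation yields $U(t,s)\to U_0(t)U_0^{-1}(t_1)$ and $V(t,s)\to V_0(t)U_0^{-1}(t_1)$, i.e.\ $Y(t,s)\to Y_0(t)U_0^{-1}(t_1)$, as required. The main obstacle is the middle step: bridging the abstract definition of recessivity (phrased in terms of \emph{every} companion solution with non-singular Wronskian) with the concrete matrix statement $S_0^{-1}(s)\to 0$ used in the limit computation. I expect this to require carefully choosing the companion solution furnished by Lemma \ref{dominant to recessive solution} and reading off the resulting cofactor block from (\ref{def UV}); all other steps amount to linear algebra on the representation formula.
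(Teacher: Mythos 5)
The paper does not actually prove this lemma: it is quoted from \cite[Theorem 3.56]{Ahlbrandt-Peterson1996} without proof, so there is no in-paper argument to compare against. Judged on its own, your proposal is the standard Olver--Reid argument and its core is correct. The representation $(\ref{def UV})$ with base $Y_0$ gives $P(s)=U_0^{-1}(t_1)$ from $U(t_1,s)=I_n$ and $S_0(s)Q(s)=-U_0^{-1}(t_1)$ from $U(s,s)=0$, and your ``bridging'' step does go through: take the companion solution with $P=0$, $Q=I_n$ in $(\ref{def UV})$, i.e.\ $U_1=U_0S_0$, $V_1=V_0S_0+(U_0^{-1})^*$; since $Y_0$ is prepared one computes $Y_0^*JY_1=(V_0^*U_0-U_0^*V_0)S_0-I_n=-I_n$, which is a nonsingular constant, so the definition of recessivity applies to this $Y_1$ and yields that $U_1(t)=U_0(t)S_0(t)$ is eventually nonsingular with $U_1^{-1}(t)U_0(t)=S_0^{-1}(t)\to 0$. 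Note that this companion comes from Lemma \ref{construct conjoined solution}, not from Lemma \ref{dominant to recessive solution}, which runs in the opposite direction (it builds a recessive solution out of a dominant one); citing the latter here is a misattribution, though the construction you have in mind is the right one. Given $S_0^{-1}(s)\to 0$, the limit computation $Q(s)\to 0$, $U(t,s)\to U_0(t)U_0^{-1}(t_1)$, $V(t,s)\to V_0(t)U_0^{-1}(t_1)$ is correct for each fixed $t\ge t_1$, and extends to all of $\I$ since a solution is determined linearly by its value at $t_1$ (invertibility of $I_n-A(t)$).

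The one genuine gap is your very first step: the claim that $U_0(t_1)$ nonsingular together with the unique two-point property forces $U_0(t)$ to be nonsingular for every $t\ge t_1$. The unique two-point property only says that a nontrivial solution cannot have its $u$-component vanish at \emph{two} distinct points of $[t_1,\infty)\cap\Z$ (compare with the zero solution). If $U_0(s_0)\xi=0$ for a single $s_0>t_1$ and $\xi\ne 0$, there is no second vanishing point to exploit, precisely because $U_0(t_1)\xi\ne 0$; so no contradiction arises and the asserted ``standard disconjugacy-type argument'' does not close. The cited source in fact assumes $U_0(t)$ nonsingular on all of $[t_1,\infty)$, and that is also what the paper supplies when it invokes this lemma (Theorem \ref{exist of recessive solution} produces a recessive solution with $\tilde U(t)$ nonsingular for $t\ge t_1$, using $(A_1)$--$(A_2)$, not the two-point property alone). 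You should either read the hypothesis as nonsingularity on the whole half-line or derive it from disconjugacy as in the proof of Theorem \ref{exist of recessive solution}; with that repair the rest of your argument stands.
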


By $B^\dag(t)$ denote   the Moore-Penrose Inverse of the matrix $B$, i.e.,
the unique matrix satisfying $BB^\dag B=B$ and $B^\dag BB^\dag=B^\dag$
such that both $BB^\dag$ and $B^\dag B$ are Hermitian.
It can be easily verified that  $B^\dagger\ge 0$ if $B\ge 0$.
A  solution $y=(u;v)$ of $(1.1)$ is said to have a {\it generalized zero} at $t$  provided that $u(t)=0$ if $t=0$,
and
\begin{align*}
&u(t-1)\neq 0,\quad  u^*(t)\in {\rm Ran}(\tilde{A}(t-1)B(t-1)),\\
&u^*(t-1)B^\dag(t-1)(I_n-A(t-1))u(t)\le 0
\end{align*}
if $t\ge 1$.
System $(1.1)$ is said to be {\it disconjugate} on  $\I$ with respect to  $\ld=\ld_0\in \R$
provided that  every solution $y$ of $(1.1)$ with $\ld=\ld_0$ has at most one generalized zero on $\I$.

For a subinterval $\I_1$ of $\I$, let $\Ld(\I_1)$ denote the vector space of
$n$-dimension vector functions $v(t)$ which are solutions of
$\De v(t)=-A^*(t)v(t)$ and $B(t)v(t)=0$ on $\I_1$. It can be easily verified that
$v(t)\in \Ld(\I_1)$ if and only if
$y(t)=(0; v(t))$ is a solution of (1.1) on $\I_1$ for any $\ld\in\C$.
System $(1.1)$ is said to be {\it normal} on some subinterval $\I_1$ of $\I$ if $\Ld(\I_1)$  is zero-dimensional.
If $(1.1)$ is normal on every  subinterval of $\I$,
then system $(1.1)$ is said to be {\it identically normal} on $\I$.

Further, we denote
\begin{eqnarray*}
&&\mathcal{L}_1(y)(t):=-\De v(t)+C(t) u(t+1)-A^*(t)v(t),\\
&&\mathcal{L}_2(y)(t):=\De u(t)-A(t)u(t+1)-B(t)v(t),\\
&&\mathcal{D}_1(\I):=\{y\in l^2_W(\I):\;\mathcal{L}_1(y)(t)=\ld W(t)u(t+1), \; t\in \I \},\\
&&\mathcal{D}_{2}(\I):=\left\{y\in l^2_W(\I):\; \mathcal{L}_2(y)(t)=0, \; t\in\I\;\right\},\\
&&\mathcal{D}_{0}(\I):=\left\{y\in \mathcal{D}_{2}(\I):\; u(0)=u(t)=0 {\rm \; for \; sufficiently \; large}\; t\right\}.
\end{eqnarray*}
Note that  $\mathcal{D}_2(\I)$ is independent of $\ld$ and it is called the admissive space of (1.1) on $\I$.
A  quadratic form associated with (1.1) is defined by
\begin{equation*}
  \mathcal{F}_{\ld}(y):=\sum_{t\in \I}u^*(t+1)\tilde{C}(t,\ld)u(t+1)
  +\sum_{t\in \I}v^*(t)B(t)v(t),\quad y\in \mathcal{D}_{0}(\I),
\end{equation*}
where $\tilde{C}(t,\ld)=C(t)-\ld W(t)$ by (\ref{ABC}). It is clear that $\mathcal{F}_{\ld}(y)=0$
for any $y\in \mathcal{D}_{2}(\I)$ with $u(t)\equiv 0$ on $\I$.
$\mathcal{F}_{\ld}(\cdot)$  is said to be {\it positive definite} on $\I$ if $\mathcal{F}_{\ld}(y)>0$
for any $y\in \mathcal{D}_{0}(\I)$ with $u(t)\not\equiv 0$ on $\I$.

For convenience, we denote the following assumptions and make some statements on them.
\begin{itemize}
\item[${\bf(A_1)}$] System $(1.1)$  is disconjugate on  $\I$ with respect to  $\ld=\ld_0\in \R$.
\item[${\bf(A_2)}$] System $(1.1)$ is identically normal on $\I$.
\item[${\bf(C_1)}$] $\mathcal{F}_{\ld_0}(\cdot)$  is   positive definite on $\I$.
\item[${\bf(C_2)}$] System $(1.1)$ has  the  unique two point property on  $\I$ with respect to  $\ld=\ld_0\in \R$.
\end{itemize}

\begin{remark} \label{remark}
\begin{enumerate}
  \item It can be easily verified  that $(A_2)$ holds if $B(t)>0$ on $\I$.
But the system $(1.1)$ specified by $(\ref{2nth equation})$ is identically normal on $\I$,
although  $B(t)$ is  singular  for any  $t\in \I$.

  \item  It is clear that $(C_2)$ implies $(A_2)$.
  \item $(A_1)$ and  $(A_2)$ imply  $(C_2)$.
  In fact, Let $y=(u;v)$ be a solution of  $(1.1)$  with $\ld=\ld_0$
satisfying $u(t_1)=u(t_2)=0$  with $t_1<t_2$.
It is clear that $y=(u;v)$   has a  generalized zero in  $[0,t_1]\cap \Z$.
In addition, one can get  that  $u(t)\equiv0$ for  $t\in [t_1, t_2] \cap \Z$.
Otherwise, $y=(u;v)$  would have another   generalized zero in  $[t_1+1,t_2]\cap \Z$.
which  contradicts  the disconjugacy of $(1.1)$.
Further, one get  that $v(t)\equiv0$   for  $t\in [t_1, t_2-1] \cap \Z$.
So, it follows that $y(t_1)=0$, which together with the invertibility of  $I_n-A(t)$, implies that
$y(t)\equiv0$ on $\I$.
\item $(A_1)$ is equivalent to  $(C_1)$. In fact,
 $(A_1)$ holds if and only if $(1.1)$ is disconjugate with respect to  $\ld_0$ on any bounded subinterval of $\I$.
It has been shown by  \cite[Theorem 2]{Bohner1996}
that system $(1.1)$ is disconjugate with respect to $\ld_0$ on a bounded subinterval $\I_1$ of $\I$
if and only if $\mathcal{F}_{\ld_0}(y)$  is positive definite on the corresponding space $\mathcal{D}_{0}(\I_1)$.
\end{enumerate}
\end{remark}

Now we  show the existence of the recessive solutions of (1.1).

\begin{theorem}\label{exist of recessive solution}
Assume that $(A_1)-(A_2)$ hold.
Then for any $t_1\ge 1$, 
system $(\ref{HS1})$ with $\ld=\ld_0$ has a  recessive solution $\tilde{Y}=(\tilde{U};\tilde{V})$
 which satisfying $\tilde{U}(t_1)=I_n$ and  $\tilde{U}(t)$ non-singular for $t\ge t_1$.
\end{theorem}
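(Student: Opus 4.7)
The plan is to construct the recessive solution as a limit of two-point boundary value solutions, following the classical Olver--Reid scheme. By item~(3) of Remark~\ref{remark}, assumptions $(A_1)$--$(A_2)$ imply the unique two-point property $(C_2)$ at $\ld=\ld_0$. Hence, for each integer $s>t_1$, there exists a unique $2n\times n$ matrix solution $Y(\cdot,s)=(U(\cdot,s);V(\cdot,s))$ of $(\ref{HS1})$ with $\ld=\ld_0$ satisfying $U(t_1,s)=I_n$ and $U(s,s)=0$. Combining $U(s,s)=0$ with the constancy of $Y^{*}(\cdot,s)JY(\cdot,s)$ given by Lemma~\ref{Liouville formulae} shows that each $Y(\cdot,s)$ is a prepared solution.

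Next, I would produce a baseline prepared solution $Y_0=(U_0;V_0)$ whose top block $U_0(t)$ is non-singular for all $t\ge t_1$, for instance the Cauchy solution determined by $U_0(t_1)=I_n$, $V_0(t_1)=0$: the disconjugacy $(A_1)$ together with the identical normality $(A_2)$ prevents any column of $U_0$ from producing a second generalized zero beyond $t_1$. By Lemma~\ref{construct conjoined solution}, the solutions $Y(\cdot,s)$ then admit the representation
\[
U(t,s)=U_0(t)\bigl(P_s+S_0(t)Q_s\bigr),\quad V(t,s)=V_0(t)\bigl(P_s+S_0(t)Q_s\bigr)+(U_0^{-1})^{*}(t)Q_s,
\]
with $P_s=I_n$ forced by $U(t_1,s)=I_n$ and $Q_s=-S_0^{-1}(s)$ forced by $U(s,s)=0$; here $S_0(s)$ is invertible for $s$ sufficiently large thanks to the positive-definiteness of $\mathcal{F}_{\ld_0}$ (item~(4) of Remark~\ref{remark}).

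The crux is then the passage $s\to\infty$. Since $B(t)\ge 0$, the matrix sequence $S_0(s)$ is monotonically non-decreasing in the Hermitian order; strict positivity of $\mathcal{F}_{\ld_0}$ forces $S_0(s)>0$ for large $s$, so $S_0^{-1}(s)$ is a non-increasing sequence of positive Hermitian matrices, and therefore converges to some limit $\widetilde S_\infty\ge 0$. Setting $\widetilde Q:=-\widetilde S_\infty$ yields a limit solution $\widetilde Y=(\widetilde U;\widetilde V)$ of $(\ref{HS1})$ with $\widetilde U(t_1)=I_n$ via the same representation formula. Non-singularity of $\widetilde U(t)$ on $[t_1,\infty)\cap\Z$ is then forced by $(C_2)$: a zero of $\det\widetilde U$ at some $t^{\ast}>t_1$ would produce a nonzero vector $c$ with $\widetilde U(t_1)c=c$ and $\widetilde U(t^{\ast})c=0$, contradicting the two-point property. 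Finally, that $\widetilde Y$ is recessive follows from the definition: for any solution $Y_1=(U_1;V_1)$ with $Y^{*}JY_1\equiv C$ non-singular, the representation formula expresses $U_1^{-1}(t)\widetilde U(t)$ as $(P_1+S_0(t)Q_1)^{-1}(I_n+S_0(t)\widetilde Q)$, and non-singularity of $C$ forces the $Q$-part of $Y_1$ to be non-degenerate, so the growth of $S_0(t)Q_1$ dominates that of $I_n+S_0(t)\widetilde Q$ and $U_1^{-1}(t)\widetilde U(t)\to 0$. The main obstacle is precisely this monotone-convergence analysis of $S_0(s)^{-1}$: one has to leverage positive-definiteness of $\mathcal{F}_{\ld_0}$ (through its Riccati-like reformulation for $VU^{-1}$) both to secure invertibility of $S_0(s)$ and to quantify its growth sharply enough to identify $\widetilde S_\infty$ with a genuinely recessive direction.
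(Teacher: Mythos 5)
Your overall strategy --- obtaining the recessive solution as the $s\to\infty$ limit of the two-point solutions $Y(\cdot,s)$ with $U(t_1,s)=I_n$, $U(s,s)=0$ --- is a genuinely different route from the paper's. The paper never passes to such a limit: it starts from the solution with $Y(0)=(0;I_n)$, shows its $U$-block is invertible for $t\ge 1$, builds from it a \emph{dominant} solution $Y_1$ via the identity $I_n=(I_n+S(t))(I_n-S_1(t))$ (which gives boundedness and monotonicity of $S_1$, hence convergence), and then invokes Lemma \ref{dominant to recessive solution} to convert the dominant solution into a recessive one. Note also that Lemma \ref{Olver-Reid recessive solution}, the Olver--Reid statement in the paper, \emph{presupposes} the existence of a recessive solution, so your scheme cannot lean on it and must establish convergence and recessiveness of the limit from scratch.

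As written, your proposal has concrete gaps. First, the baseline solution is wrongly chosen: you take $U_0(t_1)=I_n$, $V_0(t_1)=0$ and assert that disconjugacy prevents ``a second generalized zero beyond $t_1$,'' but this solution has no generalized zero at or before $t_1$ (its $u$-component equals $\eta\neq 0$ there), so $(A_1)$ still permits one generalized zero at some $t^*>t_1$, where $U_0(t^*)$ may become singular. The paper's choice $U(0)=0$, $V(0)=I_n$ is exactly what removes this obstruction: if $U(s)\eta=0$ for some $s\ge 1$, then $u(0)=u(s)=0$ and item (3) of Remark \ref{remark} forces $y\equiv 0$, a contradiction. The same error recurs when you claim $\det\widetilde U(t^*)=0$ for $t^*>t_1$ would ``contradict the two-point property'': that property only rules out nontrivial solutions whose $u$-component vanishes at \emph{two} points, and your vector $c$ satisfies $\widetilde U(t_1)c=c\neq 0$, so no contradiction arises. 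Second, the heart of the matter --- that the limit $\widetilde S_\infty$ of $S_0^{-1}(s)$ actually yields a \emph{recessive} solution, i.e.\ that $U_1^{-1}(t)\widetilde U(t)\to 0$ for every solution $Y_1$ with $Y^*JY_1\equiv C$ nonsingular --- is only asserted (``the growth of $S_0(t)Q_1$ dominates''), and you yourself flag it as the main obstacle. This is precisely the step the paper discharges structurally, since Lemma \ref{dominant to recessive solution} hands over the recessive solution $U_2=U_1S_2$, with invertibility of $U_2$ coming from $0<S_2(t)<I_n$. As it stands, the proposal does not constitute a proof.
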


\begin{proof} Since $(A_1)$ holds,
every solution $y$ of $(1.1)$ with $\ld=\ld_0$ has at most one generalized zero on $\I$.
Let $Y=(U;V)$ be the solution of (\ref{HS1}) with $\ld=\ld_0$
satisfying the initial value condition $Y(0)=(0;I_n)$.
 We claim that  $U(t)$ is non-singular for all $t\ge 1$. Assume not.
 Then there exists $s\ge 1$ and  a non-trivial vector
$\eta\in \C^{n}$  such that $U(s)\eta=0$.
Set $y(t)=(U(t)\eta;V(t)\eta)$. Then $y(t)$ is a non-trivial solution with boundary value
\begin{equation*}
  u(0)=u(s)=0.
\end{equation*}
which yields  $y(t)\equiv0$ on $\I$   by (3) of Remark \ref{remark}.
This contradicts with the initial value condition $Y(0)=(0;I_n)$.
Hence $U(t)$ is non-singular for all $t\ge 1$.

It is clear  that $Y(t)$ is a prepared basis of (\ref{HS1}).
We can get by Lemma \ref{construct conjoined solution} that $U^{-1}(t+1)\tilde{A}(t)B(t)(U^{-1})^*(t)$
is Hermitian for $t\ge 1$. In addition, the assumption $(A_1)$ implies  that
\begin{equation*}
U^*(t)B^{\dagger}(t)(I_n-A(t)) U(t+1)\ge 0,\quad t\ge 1,
\end{equation*}
and consequently
\begin{equation}\label{UABU Hemite}
 U^{-1}(t+1)\tilde{A}(t)B(t)(U^{-1})^*(t)\ge 0,\quad t\ge 1.
\end{equation}
Define
\begin{equation*}
  S(1)=0, \quad S(t)=\sum_{s=1}^{t-1}U^{-1}(s+1)\tilde{A}(s)B(s)(U^{-1})^*(s),\quad t\ge 2,
\end{equation*}
and set
\begin{eqnarray}\label{U(t)}
\left\{\begin{array}{ll}
         U_1(t)=U(t)(I_n+S(t)), &\\
        V_1(t)=V(t)(I_n+S(t))+(U^{-1})^*(t),&t\ge 1.
       \end{array}\right.
\end{eqnarray}
Again by  Lemma \ref{construct conjoined solution}, $Y_1=(U_1;V_1)$ is a prepared basis solution of (\ref{HS1}) with $\ld=\ld_0$.
It follows from   (\ref{UABU Hemite})  that
\begin{equation*}
  0=S(1)\le S(t)\le S(t+1),\quad t\ge 2.
\end{equation*}
This implies that  $U_1(t)$ is non-singular for $t\ge 1$.
Again by Lemma  \ref{construct conjoined solution}, $Y=(U;V)$ can be written as
\begin{eqnarray} \label{U2(t)}
      \left\{\begin{array}{ll}
                U(t)=U_1(t)(I_n-S_1(t)), \\
        V(t)=V_1(t)(I_n-S_1(t))-(U_1^{-1})^*(t),& t\ge 1,
             \end{array}\right.
\end{eqnarray}
where
\begin{align*}
 S_1(1)=0,\quad  S_1(t)=\sum_{s=1}^{t-1}U_1^{-1}(s+1)\tilde{A}(s)B(s)(U_1^{-1})^*(s), \quad t\ge 2.
\end{align*}

 It follows from the first formulae of (\ref{U(t)}) and (\ref{U2(t)}) that
\begin{align*}
I_n=(I_n+S(t))(I_n-S_1(t)), \quad t\ge 1.
\end{align*}
Since $S(t)\ge 0$ and it is  non-decreasing,  one has that $I_n-S_1(t)>0$ and it is non-increasing.
Therefore, the limit
\begin{equation*}
  S_1(\infty)=\lim_{t\to \infty}S_1(t)
\end{equation*}
exists and it satisfies
\begin{equation*}
 0< S_1(\infty)< I_n.
\end{equation*}
Thus, $Y_1=(U_1;V_1)$ is a dominant solution with $U_1(t)$  non-singular for $t\ge 1$.

Define
\begin{equation*}
 S_2(t)=\sum_{s=t}^{\infty}U_1^{-1}(s+1)\tilde{A}(s)B(s)(U_1^{-1})^*(s),\quad t\ge 1.
\end{equation*}
It can been shown similarly as above that $ 0< S_2(t)< I_n$ for all $t\ge 1$.
Set
\begin{align*}
\left\{\begin{array}{ll}
         U_2(t)=U_1(t)S_2(t), &\\
    V_2(t)=V_1(t)S_2(t)-(U_1^{-1})^*(t),&   t\ge 1.
       \end{array}\right.
\end{align*}
Then $Y_2=(U_2;V_2)$  is determined uniquely on $\I$ and it follows from
Lemma \ref{dominant to recessive solution} that
$Y_2(t)$ is a recessive solution of (1.1).
It is clear that  $U_2(t)$ is invertible for $t\ge 1$.
Set $\tilde{Y}(t)=Y_2(t)U_2^{-1}(t_1)$. Then $\tilde{Y}(t)$ is still a recessive solution satisfying
$\tilde{U}(t_1)=I_n$ and $\tilde{U}(t)$ is invertible for $t\ge t_1$.
The proof is complete.
\end{proof}


\subsection{The  existence  of the Friedrichs extension}

Denote
\begin{equation*}
l^2_{W}(\I):=\left\{y\in l(\I):\;\sum_{t\in
\I}u^*(t+1)W(t)u(t+1)<\infty\right\}
\end{equation*}
with the semi-scalar product
\begin{eqnarray*}
\langle y_1,y_2\rangle_W:=\sum_{t\in \I}u_2^*(t+1)W(t)u_1(t+1).
\end{eqnarray*}
Further,  we define $\|y\|_W:=\langle y, y\rangle_W^{1/2}$ for $y\in l^2_{W}(\I)$,
and  introduce the following quotient space
\begin{equation*}
L^2_{W}(\I):=l^2_W(\I)/\{y\in l^2_W(\I):\;
\|y\|_W=0\}.
\end{equation*}
Then by  \cite[Lemma 2.5]{Shi2006}, $L^2_W(\I)$ is a Hilbert space with
the inner product $\langle \cdot,\cdot\rangle_W$. For any $y\in
l^2_W(\I)$, we denote by $[y]$   the corresponding
equivalent class in $L^2_W(\I)$. Denote
\begin{eqnarray*}
l^2_{W,0}(\I):=\{y\in l^2_{W}(\I):\;\exists\; s\in \I \;{\rm s.t.}\;y(0)=y(t)=0,\; t\ge s\}.
\end{eqnarray*}

Consider the following non-homogeneous system corresponding to (1.1)
\begin{eqnarray}\label{NHS}
\left\{\begin{array}{l}
        \De u(t)=A(t)u(t+1)+B(t)v(t),\\
        \De v(t)=C(t)u(t+1)-A^*(t)v(t)- W(t)u'(t+1),\quad t\in \I,
       \end{array}\right.
\end{eqnarray}
and  denote
\begin{eqnarray*}
&&H:=\{([y],[y'])\in (L^2_W(\I))^2: \;\exists\;y\in [y]\;s.t.\;(\ref{NHS})\; {\rm holds}\}, \\
&&H_{00}:=\{([y],[y'])\in H:  \exists \;y\in [y] \; {\rm s.t.\;} y\in l^2_{W,0}(\I)\;{\rm and }\;(\ref{NHS})\; {\rm holds}\},\nonumber\\
&&H_0:=\overline{H}_{00},
\end{eqnarray*}
where $H$, $H_{00}$, and $H_0$  are called the maximal, pre-minimal,
and minimal subspaces  corresponding to system $(1.1)$, separately.

It is clear that $H_{00}$ and $H_{00}$ are Hermitian. It follows from \cite[Theorem 3.1]{Ren2011}
that $H_{00}^*=H_0^*=H$ and from \cite[Theorem 3.2]{Ren2014Laa} that
\begin{align}
H_{0}=\{(y,[y'])\in H: y(0)=0, (x,y)(\infty)=0 \; {\rm for
\; all}\; x\in { \mathcal{D}}(H)\}.
\end{align}

We say that  the {\it definiteness condition} for system $(1.1)$ holds on $\I$,
if there exists a finite subinterval $\I_0=[t_0,s_0]\cap\Z\subset \I$ such that for any
$\ld\in\C$,  every non-trivial solution $y(t)$ of $(1.1)$ satisfies
\begin{align*}
\sum_{t\in \I_0}u^*(t+1)W(t)u(t+1)>0.
\end{align*}
For convenience, we denote
\begin{itemize}
\item[${\bf(A_3)}$]  The  definiteness condition for system $(1.1)$ holds on   $\I$.
\end{itemize}

\begin{remark}\label{remark on A_3} Assumption $(A_3)$  is important and necessary to
the characterization of Fridrichs extension of the minimal subspace generated by $(1.1)$.
\begin{enumerate}

\item It has been shown by \cite[Corollary 5.1]{Ren2011} that the positive and negative deficiency indices
are equal to the the number of linearly independent square summable solutions of $(1.1)$ with $\ld$ in the upper or lower half-planes,
if and only if the  definiteness condition for system $(1.1)$ holds.  With the assumption $(A_3)$,
a complete characterization of all the self-adjoint  extensions for $(\ref{HS2})$
 has been obtained in
terms of boundary conditions via linearly independent square summable
solutions  \cite{Ren2014Laa}.

\item It has been shown in \cite[Theorem 4.2]{Ren2011}  that   $(A_3)$ holds
if and only if    for any $([y],[y'])\in H$, there
exists a unique $y\in [y]$ such that $(\ref{NHS})$ holds.
Therefore, we can write briefly $(y,[y'])\in H$  instead of $([y],[y'])\in H$.

\item If  $(A_3)$  holds, then any two elements of $H$ can be patched up to
construct another new element of $H$. For example, for any $ (x,[x']), (y,[y'])\in H$, there exists $(z,[z'])\in H$ satisfying
\begin{align*}
z(t)=x(t) \;\;{\rm for}\;\; t\leq t_0,\quad  z(t)=y(t)\;\;{\rm
for}\; \;t \ge s_0+1.
\end{align*}
In addition, there exist  $(z_j,[z_j'])\in H$ satisfying
\begin{align*}
z_j(t_0)=e_j,\quad  z_j(t)=0, \quad t \ge s_0+1,\quad 1\le j\le 2n,
\end{align*}
where
\begin{align*}
    e_i:=(\underbrace{0,\ldots,0}_{i-1}, 1,0,\ldots,0)^T\in \C^{2n}.
\end{align*}
For details, see \cite[Lemma 3.3, Remark 3.3]{Ren2014Laa}.

\item The definiteness condition for $(1.1)$ is independent of $\ld$,
and some sufficient conditions  have been given  \cite[Section 4]{Ren2011}.
\end{enumerate}
\end{remark}

\begin{lemma}\label{inner charact} Assume that  $(A_3)$  holds.
For any $(y_i,[y_i'])\in H$ with $y_i=(u_i;v_i)$ and $y_i'=(u_i';v_i')$, $i=1,2$, it follows
\begin{align}
  \langle y_1',y_2\rangle_W&=\sum_{t\in \I}u_2^*(t+1)\mathcal{L}_1(y_1)(t)\nonumber \\
  &=\sum_{t\in \I}\{u_2^*(t+1)C(t)u_1(t+1)+v_2^*(t)B(t)v_1(t)\}-u_2^*(t)v_1(t)|_0^{\infty}.\label{inner product}
\end{align}
In particularly, for any $(y,[y'])\in H$,
\begin{align}
  \langle y',y\rangle_W=\sum_{t\in \I}\{u^*(t+1)C(t)u(t+1)+v^*(t)B(t)v(t)\}-u^*(t)v(t)|_0^{\infty},
\end{align}
and  for any $(y,[y'])\in H_{00}$,
\begin{align}
  \langle y',y\rangle_W =\sum_{t\in \I}u^*(t+1)C(t)u(t+1)+\sum_{t\in \I}v^*(t)B(t)v(t).
\end{align}
\end{lemma}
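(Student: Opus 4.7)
The plan is a one-step discrete integration by parts, once we identify the unique representative of each equivalence class solving (\ref{NHS}). Under $(A_3)$, Remark~\ref{remark on A_3}(2) furnishes a unique $y_i'=(u_i';v_i')\in[y_i']$ for which (\ref{NHS}) holds, so the expressions below are all unambiguous. The second row of (\ref{NHS}) for $y_1$ reads
\begin{equation*}
    \mathcal{L}_1(y_1)(t) \;=\; -\Delta v_1(t)+C(t)u_1(t+1)-A^*(t)v_1(t) \;=\; W(t)u_1'(t+1),
\end{equation*}
so left-multiplying by $u_2^*(t+1)$ and summing over $\I$ gives at once
\begin{equation*}
    \langle y_1',y_2\rangle_W \;=\; \sum_{t\in\I}u_2^*(t+1)W(t)u_1'(t+1) \;=\; \sum_{t\in\I}u_2^*(t+1)\mathcal{L}_1(y_1)(t),
\end{equation*}
which is the first equality.

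For the second equality I would apply Abel summation to the $-\Delta v_1$ piece over the finite range $t\in[0,N]\cap\Z$:
\begin{equation*}
    -\sum_{t=0}^{N}u_2^*(t+1)\Delta v_1(t) \;=\; -\,u_2^*(t)v_1(t)\big|_{0}^{N+1}+\sum_{t=0}^{N}\Delta u_2^*(t)\,v_1(t).
\end{equation*}
The first row of (\ref{NHS}) for $y_2$, namely $\Delta u_2(t)=A(t)u_2(t+1)+B(t)v_2(t)$, gives $\Delta u_2^*(t)=u_2^*(t+1)A^*(t)+v_2^*(t)B(t)$. Substituting and combining with the remaining $C$- and $-A^*v_1$-contributions of $\mathcal{L}_1(y_1)$, the $A^*v_1$ terms cancel and one lands on the truncated identity
\begin{equation*}
    \sum_{t=0}^{N}u_2^*(t+1)\mathcal{L}_1(y_1)(t) \;=\; \sum_{t=0}^{N}\{u_2^*(t+1)C(t)u_1(t+1)+v_2^*(t)B(t)v_1(t)\} - u_2^*(t)v_1(t)\big|_{0}^{N+1}.
\end{equation*}
Sending $N\to\infty$, the left-hand side converges to $\langle y_1',y_2\rangle_W$ by the first step, so the right-hand side converges as a combined expression; the symbol $u_2^*(t)v_1(t)|_{0}^{\infty}$ is to be read as this joint limit, and (\ref{inner product}) follows.

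The remaining two identities are immediate specializations. Taking $y_1=y_2=y$ produces the second stated formula. When $(y,[y'])\in H_{00}$, the witness $y\in l^2_{W,0}(\I)$ satisfies $u(0)=v(0)=0$ together with $u(t)=v(t)=0$ for all $t$ sufficiently large, so $u^*(t)v(t)|_{0}^{\infty}$ vanishes and the last formula drops out.

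The only mildly delicate point is that none of the three pieces in the finite-$N$ identity is a priori absolutely summable on its own; their joint convergence is forced a posteriori by the finiteness of $\langle y_1',y_2\rangle_W$. (Equivalently, the existence of $\lim_{t\to\infty}u_2^*(t)v_1(t)$ appearing in $(y_1,y_2)(\infty)$ could instead be extracted from Lemma~\ref{Liouville formulae} applied to $y_1,y_2$.) Beyond this bookkeeping, no step in the argument presents a real obstacle.
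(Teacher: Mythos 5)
Your argument is correct and follows essentially the same route as the paper: identify the unique representative via the definiteness condition, read off $W(t)u_1'(t+1)=\mathcal{L}_1(y_1)(t)$ from the second equation of (\ref{NHS}), and then perform a discrete integration by parts using the first equation of (\ref{NHS}) for $y_2$ to produce the $v_2^*Bv_1$ term, the cancellation of the $A^*v_1$ terms, and the boundary term $u_2^*(t)v_1(t)\vert_0^{\infty}$. The only cosmetic difference is that you truncate to $[0,N]$ and pass to the limit (with the sensible caveat about joint convergence), whereas the paper applies the product rule $\Delta(u_2^*(t)v_1(t))=\Delta u_2^*(t)v_1(t)+u_2^*(t+1)\Delta v_1(t)$ termwise and sums directly.
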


\begin{proof} It suffices to show (\ref{inner product}) holds.
Let $(y_i,[y_i'])\in H$ with $y_i=(u_i;v_i)$ and $y_i'=(u_i';v_i')$, $i=1,2$.
Then it follows from (\ref{NHS}) that
\begin{eqnarray}\label{inner product2}
\left\{\begin{array}{l}
         \De u_i(t)=A(t)u_i(t+1)+B(t)v_i(t), \\
         \De v_i(t)=C(t)u_i(t+1)-A^*(t)v_i(t)- W(t)u'_i(t+1),\quad t\in \I.
       \end{array}
\right.
\end{eqnarray}
By the second formula of (\ref{inner product2}), one can get
\begin{eqnarray*}
W(t)u'_1(t+1)=C(t)u_1(t+1)- [\De v_1(t)+A^*(t)v_1(t)]=\mathcal{L}_1(y_1)(t),
\end{eqnarray*}
and further one can get
\begin{align}\label{diff1}
\langle y_1',y_2\rangle_W &=\sum_{t\in \I}u_2^*(t+1)W(t)u_1'(t+1)=\sum_{t\in \I}u_2^*(t+1)\mathcal{L}_1(y_1)(t)\nonumber\\
  &=\sum_{t\in \I}\left\{u_2^*(t+1)C(t)u_1(t+1)-u_2^*(t+1)[\De v_1(t)+A^*(t)v_1(t)]\right\}.
\end{align}
By using the formula
\begin{equation*}
  \Delta (u^*(t)v(t))=\De u^*(t) v(t)+u^*(t+1)\De v(t)
\end{equation*}
and the first formula of (\ref{inner product2}) one can get  that
\begin{align}\label{diff2}
-u_2^*(t+1)[\De v_1(t)+A^*(t)v_1(t)]=v_2^*(t)B(t)v_1(t)-\Delta (u_2^*(t)v_1(t)).
\end{align}
Inserting (\ref{diff2})  into (\ref{diff1}), one can obtain the second equation of  (\ref{inner product}). The proof is complete.
\end{proof}

Let $\ld\in \C$ and $(y,[y'])\in H-\ld I$.
Since $I_n-A(t)$ is invertible, one can get from (\ref{NHS}) that
$y(t)$ is  determined uniquely by $[y']$ and the initial value $y(0)$.
In particular, $y$ is  determined uniquely by $[y']$ for any $(y,[y'])\in H_{00}-\ld I$.
This means that $(H_{00}-\ld I)^{-1}$ is an Hermitian operator.
Now,  we introduce a  sesquilinear form $\bf t_{\ld}$ associated with $ (H_{00}-\ld I)^{-1}$:
\begin{align*}
{\bf t}_{\ld}[y']:&=\langle y',y\rangle_W-\ld \langle y,y\rangle_W,\quad \forall\; (y,[y'])\in H_{00}-\ld I.
\end{align*}
It is clear  $\mathcal{D}({\bf t_{\ld}})=\mathcal{R}( H_{00}-\ld I)=\mathcal{D}( (H_{00}-\ld I)^{-1})$.
One get  $\bf t_{\ld}$ is closable by Lemma \ref{Hermite relation to form}.
In addition, it follows from Lemma   \ref{inner charact}  that for any $(y,[y'])\in H_{00}-\ld I$,
\begin{align*}
{\bf t}_{\ld}[y']&=\mathcal{F}_{\ld}(y)=\sum_{t\in \I}u^*(t+1)\tilde{C}(t,\ld)u(t+1)+\sum_{t\in \I}v^*(t)B(t)v(t).
\end{align*}

\begin{theorem}\label{bounded below}
Assume that $(A_1)-(A_3)$ hold and let $\ld<\ld_0$. Then $H_{0}-\ld  I$  is bounded from below with the lower bound $\ga>0$.
And consequently, $d_+(H_0)=d_-(H_0):=d$ and therefore  $H_{0}$ has  a unique self-adjoint extension (Friedrichs extension)
with the  lower bound $\ga+\ld$.
\end{theorem}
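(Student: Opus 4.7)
The plan is to leverage the disconjugacy equivalent $(C_1)$ to produce a uniform positive lower bound for the energy $\mathcal{F}_{\ld}$, transfer it to a lower bound for the relation $H_0 - \ld I$, and then assemble the three conclusions from the general results of Section~2: closability of the Hermitian-relation form, constancy of the deficiency index on the regularity domain of a lower-bounded Hermitian relation, and the form-to-self-adjoint-relation correspondence.

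First I would invoke Remark~\ref{remark}(4) to pass from $(A_1)$ to $(C_1)$, which gives $\mathcal{F}_{\ld_0}(y) > 0$ for every $y \in \mathcal{D}_0(\I)$ with $u \not\equiv 0$ and, trivially from $B(t)\ge 0$, $\mathcal{F}_{\ld_0}(y) \ge 0$ when $u \equiv 0$. Since $\tilde{C}(t,\ld) = C(t) - \ld W(t)$, the splitting
\begin{equation*}
\mathcal{F}_{\ld}(y) = \mathcal{F}_{\ld_0}(y) + (\ld_0 - \ld)\|y\|_W^2 \ge (\ld_0 - \ld)\|y\|_W^2
\end{equation*}
holds on $\mathcal{D}_0(\I)$ whenever $\ld < \ld_0$. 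For any $(y,[y']) \in H_{00}$, the canonical representative $y \in l^2_{W,0}(\I)$ satisfies $\mathcal{L}_2(y)\equiv 0$ via~(\ref{NHS}) and has $u(0)=0$ with $u(t)=0$ for large $t$, hence $y \in \mathcal{D}_0(\I)$; Lemma~\ref{inner charact} then kills the boundary contribution and gives
\begin{equation*}
\langle y' - \ld y, y\rangle_W = \mathcal{F}_{\ld}(y) \ge \ga\|y\|_W^2, \qquad \ga := \ld_0 - \ld > 0.
\end{equation*}
Thus $H_{00} - \ld I$ is bounded below by $\ga$, and continuity of the inner product under closure transfers this bound to $H_0 - \ld I = \overline{H_{00} - \ld I}$.

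Given this lower bound, Lemma~\ref{lower bound def index equal} places $\C \setminus [\ga,\infty)$ inside the regularity domain of $H_0 - \ld I$ with constant deficiency index, so $d_+(H_0 - \ld I) = d_-(H_0 - \ld I)$. Using $d_\mu(H_0 - \ld I) = d_{\mu+\ld}(H_0)$ together with the half-plane constancy of $d_\mu(H_0)$ (from Hermiticity of $H_0$), one obtains $d_+(H_0) = d_-(H_0) =: d$, and the opening lemma of Section~2.1 (the Coddington criterion) supplies self-adjoint extensions of $H_0$. For the Friedrichs extension itself, Lemma~\ref{Hermite relation to form} applied to the Hermitian relation $H_{00} - \ld I$ delivers the associated sesquilinear form as Hermitian and closable; identified with $\mathcal{F}_{\ld}$ on $\mathcal{D}(H_{00})$ via Lemma~\ref{inner charact}, this form has lower bound $\ga > 0$, which descends to its closure $\overline{\mathbf{t}_{\ld}}$. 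Lemma~\ref{existence F extension} applied to $\overline{\mathbf{t}_{\ld}}$ then yields a unique self-adjoint relation $T_F$ with the same lower bound $\ga$; since $T_F$ is closed and extends $H_{00} - \ld I$, it also extends $H_0 - \ld I$, and $T_F + \ld I$ is the desired unique Friedrichs extension of $H_0$ with lower bound $\ga + \ld$. The main challenge is bookkeeping rather than analysis: one must keep the $\ld$-shift straight when transporting lower bounds and deficiency indices, and correctly identify the abstract relation-form with the concrete energy $\mathcal{F}_{\ld}$ — once these identifications are in place, the machinery of Section~2 delivers the remaining conclusions automatically.
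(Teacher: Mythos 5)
Your proposal is correct and follows essentially the same route as the paper: both rest on Remark \ref{remark}(4) (disconjugacy $\Leftrightarrow$ positive definiteness of $\mathcal{F}_{\ld_0}$), the identification $\langle y'-\ld y,y\rangle_W=\mathcal{F}_{\ld}(y)$ on $H_{00}$ from Lemma \ref{inner charact}, Lemma \ref{lower bound def index equal} for the equality of deficiency indices, and Lemmas \ref{Hermite relation to form} and \ref{existence F extension} for the Friedrichs extension. The only difference is cosmetic: you derive the bound directly via the splitting $\mathcal{F}_{\ld}=\mathcal{F}_{\ld_0}+(\ld_0-\ld)\|\cdot\|_W^2$ (yielding the explicit value $\ga=\ld_0-\ld$), whereas the paper reaches the same conclusion by contradiction.
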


\begin{proof} It suffices to show  $H_{00}-\ld I$  is bounded from below with the lower bound $\ga>0$.
Suppose that $H_{00}-\ld I$  is not bounded from below with the lower bound $\ga>0$.
Then for any given $\varepsilon>0$, there exists $(y,[y'])\in H_{00}$ such that
\begin{align*}
 \langle y'-\ld y,y\rangle_W\le \varepsilon \langle y,y\rangle_W.
\end{align*}
In particular, by taking $\varepsilon=(\ld_0-\ld)/2$, we obtain
\begin{align*}
\mathcal{F}_{\ld_0}(y)= \langle y'-\ld_0y, y \rangle_W<0.
\end{align*}
This   contradicts with the assumption $(A_1)$ by (4) of Remark \ref{remark}.
Hence, $H_{00}-\ld I$ and consequently $H_{0}-\ld I$ is  bounded from below with the lower bound $\ga>0$.
This yields that $H_{0}$ is  bounded from below with lower bound $\ga+\ld$ and
it   follows from Lemma \ref{lower bound def index equal} that
$d_+(H_0)=d_-(H_0)$ and consequently  $H_{0}$ has self-adjoint extensions.

The boundedness of $H_0-\ld I$ implies the boundedness of ${\bf t}_{\ld}[\cdot]$,  as well as
its closure ${\bf \bar{t}}_{\ld}[\cdot]$.
Further, it follows from Lemma \ref{existence F extension} that  there exists a unique self-adjoint  extension of $H_0-\ld I$
with the same lower bound. This means that  $H_{0}$ has a unique Friedrichs extension with lower bound $\ga+\ld$.
The proof is complete.
\end{proof}

In the following, we denote the Friedrichs extension of $H_0$ by $H_F$.
Then $H_F-\ld I$ is the Friedrichs  extension of $H_0-\ld I$.

\begin{lemma}\label{hf u(0)=0}
 Assume that  $(A_1)-(A_3)$ hold. Then  $y(0)=0$ for any $(y,[y'])\in H_F$.
\end{lemma}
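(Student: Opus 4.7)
The plan is to pass from the form-theoretic description of $H_F$ provided by Theorem \ref{bounded below} and Lemma \ref{existence F extension} back to pointwise information through the recursion \eqref{NHS}, and to close the argument with the definiteness assumption $(A_3)$.

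Fix $\ld<\ld_0$ and $(y,[y'])\in H_F$. By Theorem \ref{bounded below}, $H_F-\ld I$ is the Friedrichs extension of $H_0-\ld I$ with lower bound $\ga>0$, and Lemma \ref{existence F extension} identifies it with the self-adjoint relation associated with the closure $\bar{\mathbf t}_\ld$ of $\mathbf t_\ld$. Applying Lemma \ref{kato lemma}, I extract a sequence $\{(y_n,[f_n])\}\subset H_{00}-\ld I$ with $f_n\to y'-\ld y$ in $L^2_W(\I)$ and $\mathbf t_\ld[f_n-f_m]\to 0$. Since $\mathbf t_\ld[f_n-f_m]\ge \ga\,\|y_n-y_m\|_W^2$, the sequence $\{y_n\}$ is Cauchy in $L^2_W(\I)$, and closedness of $H_0-\ld I$ together with injectivity of $H_F-\ld I$ identifies the limit with $y$. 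Because $y_n\in l^2_{W,0}(\I)$, one has $y_n(0)=0$; setting $y_n'=f_n+\ld y_n$, the convergence $[y_n']\to[y']$ in $L^2_W(\I)$ yields $W(t)(u_n'-u')(t+1)\to 0$ for each fixed $t\in\I$.

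Subtracting \eqref{NHS} for $y_n$ from \eqref{NHS} for $y$ produces
\begin{align*}
(u-u_n)(t+1)&=\tA(t)\bigl[(u-u_n)(t)+B(t)(v-v_n)(t)\bigr],\\
(v-v_n)(t+1)&=(I_n-A^*(t))(v-v_n)(t)+C(t)(u-u_n)(t+1)-W(t)(u'-u_n')(t+1),
\end{align*}
with $n$-independent initial datum $(y-y_n)(0)=y(0)$. An induction on $t$, using $W(t)(u_n'-u')(t+1)\to 0$, shows $(y-y_n)(t)\to h(t)$ pointwise for every $t\in\I$, where $h$ is the unique solution of the homogeneous system \eqref{HS1} with $\ld=0$ satisfying $h(0)=y(0)$. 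On the other hand $\|y_n-y\|_W\to 0$ gives $W(t)(u_n-u)(t+1)\to 0$ for every $t$, and combining with the pointwise convergence forces $W(t)h_u(t+1)=0$, so that $\sum_{t\in\I}h_u^*(t+1)W(t)h_u(t+1)=0$.

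If $h\not\equiv 0$, assumption $(A_3)$ supplies a finite window $\I_0\subset\I$ on which $\sum_{t\in\I_0}h_u^*(t+1)W(t)h_u(t+1)>0$, contradicting the display above; hence $h\equiv 0$ and in particular $y(0)=h(0)=0$. The chief obstacle is the inductive identification of the pointwise limit of $y-y_n$ with the homogeneous solution $h$: the form convergence of $f_n$ only delivers the weak information $W(t)(u_n'-u')(t+1)\to 0$, not convergence of $u_n'(t+1)$ itself, so the induction must exploit that precisely the combination $W(t)u'(t+1)$ appears in the $v$-recursion of \eqref{NHS}. Once this identification is in hand, $(A_3)$ is the exact ingredient that converts the vanishing of the weighted norm into rigidity of the initial datum.
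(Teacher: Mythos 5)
Your argument is correct and follows the paper's proof in all essentials: both approximate $[y'-\ld y]$ by the range of $H_{00}-\ld I$ via the form closure, propagate the resulting convergence pointwise through the recursion \eqref{NHS} using only the weak information $W(t)(u_n'-u')(t+1)\to 0$, and then pin down the initial value. The only difference is the endgame --- the paper takes the pointwise limit $y_0$ of the approximants themselves and identifies $y_0=y$ via the boundedness of $(H_F-\ld I)^{-1}$, whereas you take the pointwise limit $h$ of $y-y_n$, show it is a homogeneous solution with vanishing $W$-norm, and kill it with $(A_3)$ directly, which makes the role of the definiteness condition more explicit than the paper's final line.
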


\begin{proof} Since $\mathcal{D}(H_{F})=\mathcal{D}(H_{F}-\ld I)$ for all $\ld\in \R$, it suffices to show
 $y(0)=0$ for any $(y,[y'])\in H_F-\ld I$ with some $\ld<\ld_0$.

Let $\ld<\ld_0$ and  $(y,[y'])\in H_{F}-\ld I$. Note that $H_{F}-\ld I$ is a Friedrichs extension
of $H_{00}-\ld I$, and   $(H_{F}-\ld I)^{-1}$ is a Friedrichs extension
of $(H_{00}-\ld I)^{-1}$.
It follows from \ref{existence F extension} that $\mathcal{D}((H_{F}-\ld I)^{-1})\subset \mathcal{D}(\bar{\bf t})$.
So, there exists  $(y_m,[y'_m])\in H_{00}-\ld I$ such that $[y'_{m}]\stackrel{\bf t_{\ld}}{\longrightarrow}\;[y']$.
That is,
\begin{equation}\label{u'-um'}
\lim_{m\to \infty}\sum_{t\in\I}(u'-u'_m)^*(t+1)W(t)(u'-u'_m)(t+1)=0,
\end{equation}
and
\begin{align*}
{\bf \bar{t}}_{\ld}[y']&=\lim_{m\to \infty}{\bf t}_{\ld}[y'_m]\\
&=\lim_{m\to \infty}\sum_{t\in\I}u_m^*(t+1)\tilde{C}(t,\ld)u_m(t+1)+\sum_{t\in\I}v_m^*(t)B(t)v_m(t).
\end{align*}
It is clear that (\ref{u'-um'}) yields
\begin{equation}\label{wu'm}
\lim_{m\to \infty}W(t)u'_m(t+1)=W(t)u'(t+1),\quad t\in \I.
\end{equation}

Since $(y_m,[y_m'])\in H_{00}-\ld I$, it follows that
\begin{eqnarray}\label{inner product m}
\left\{\begin{array}{l}
         \De u_m(t)=A(t)u_m(t+1)+B(t)v_m(t), \quad t\in \I,\\
         \De v_m(t)=\tilde{C}(t,\ld)u_m(t+1)-A^*(t)v_m(t)- W(t)u'_m(t+1).
       \end{array}
\right.
\end{eqnarray}
Since $u_m(0)=v_m(0)=0$ for all $m$, it follows from the first formula of  (\ref{inner product m}) that $u_m(1)=0$ for all $m$
and then it follows from the second formula that (\ref{inner product m}) that $v_m(1)=- W(0)u'_m(1)$.
This yields
\begin{equation*}
  \lim_{m\to \infty} v_m(1)=- W(0)u'(1)
\end{equation*}
by using (\ref{wu'm}).
Inserting  $u_m(1)=0$ and $v_m(1)=- W(0)u'_m(1)$ into the first formula of (\ref{inner product m}) and letting $m\to \infty$,
one get that
\begin{equation*}
  \lim_{m\to \infty}u_m(2)=(I_n-A(1))^{-1}B(1)W(0)u'(1).
\end{equation*}
Repeating the above procession, one can get that for each $t\in \I$,  $\lim_{m\to \infty}y_m(t)$ exists, which is
denoted by $y_0(t)$.
It is clear  $(y_0,[y'])\in H_F-\ld I$ and $y_0(0)=0$.
Since $H_F-\ld I$  is bounded with lower bound $\gamma>0$, $(H_{F}-\ld I)^{-1}$ is a bounded operator.
Therefore,  $y_0(t)=y(t)$ for all $t\in \I$. In particular, $y(0)=y_0(0)=0$. The proof is complete.
\end{proof}

\subsection{The characterization  of the recessive solutions}

Let  $\tilde{Y}_s=(\tilde{U}_s; \tilde{V}_s)$ be the solution of $(1.1)$ on $\I$,
which satisfies the boundary conditions
\begin{equation*}
 \tilde{U}_s(1)=I_n, \quad  \tilde{U}_s(s)=0, \quad (s\ge s_0+1)
\end{equation*}
and let $\tilde{Y}=(\tilde{U}; \tilde{V})$
be the recessive solution  of (1.1), satisfying $\tilde{U}(1)=I_n$.
Denote
\begin{equation*}
  \tilde{Y}(t)=[\tilde{y}_1(t), \tilde{y}_2(t), \ldots, \tilde{y}_n(t)],\quad
  \tilde{Y}_s(t)=[\tilde{y}_{1s}(t), \tilde{y}_{2s}(t), \ldots, \tilde{y}_{ns}(t)].
\end{equation*}
It  follows from Lemma \ref{Olver-Reid recessive solution} that
\begin{equation*}
  \tilde{y}_j(t)=\lim_{s\to \infty}\tilde{y}_{js}(t),\quad t\in \I.
\end{equation*}

It follows from (3) of Remark \ref{remark on A_3} that
there exist  $(z_j,[z_j'])\in H$ satisfying
\begin{align}\label{zj=ej}
z_j(0)=\tilde{y}_j(0),\quad  z_j(t)=0, \quad t \ge s_0+1,\quad 1\le j\le n.
\end{align}
Define
\begin{align*}
  y_{j}:=\tilde{y}_j-z_j,\quad 1\le j\le n.
 \end{align*}
It is clear that $y_j(0)=0$, and
\begin{equation*}\label{H-ldI}
 (y_j,[\ld z_j-z_j'])\in H-\ld I,
\end{equation*}

On the other hand,  let $\ld<\ld_0$. Then $(H_{F}-\ld I)^{-1}$ is a bounded operator defined on $L_W^2(\I)$.
Hence, there exists a unique $\check{y}_j\in l^2_W(\I)$ such that
\begin{equation}\label{H-ldI2}
 (\check{y}_j,[\ld z_j-z_j'])\in H_F-\ld I.
\end{equation}
By noting $y_j(0)=0$ and by using Lemma \ref{hf u(0)=0}, one get that
$y_j(t)=\check{y}_j(t)$ on $\I$.

Based on the above discussion, we get the  following  characterization of the recessive  solutions.

\begin{theorem}\label{cha of recessive solu}
Assume that  $(A_1)-(A_3)$ hold and  $\ld<\ld_0$. Let $ \tilde{Y}(t)=[\tilde{y}_1, \tilde{y}_2, \ldots, \tilde{y}_n](t)$
be the recessive  solution of $(1.1)$ which satisfying $\tilde{U}(1)=I_n$.
Then
\begin{align}\label{recessive solution charac}
   \tilde{y}_j(t)=z_j(t)+\check{y}_j(t), \quad t\in \I,\quad j=1,2,\ldots,n,
\end{align}
where $z_j$ is defined by $(\ref{zj=ej})$ and $\check{y}_j$ is defined by $(\ref{H-ldI2})$.
\end{theorem}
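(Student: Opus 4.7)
The plan is to trace through the decomposition assembled just before the statement and verify that the two candidates, $y_j := \tilde y_j - z_j$ and $\check y_j$, coincide. The backbone is that, under $(A_1)$--$(A_3)$ and for $\lambda<\lambda_0$, Theorem \ref{bounded below} guarantees that $H_F-\lambda I$ is bounded below by some $\gamma>0$, so $(H_F-\lambda I)^{-1}$ is everywhere defined and bounded on $L^2_W(\I)$; this forces uniqueness of the right-hand-side pre-image in $H_F-\lambda I$.

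I would begin by confirming the supporting pieces. Using part (3) of Remark \ref{remark on A_3}, I take $(z_j,[z_j'])\in H$ of compact support in $[0,s_0]\cap\Z$ with $z_j(0)=\tilde y_j(0)$. Then $y_j:=\tilde y_j-z_j$ agrees with $\tilde y_j$ outside a finite interval and satisfies $y_j(0)=0$. A direct subtraction of \eqref{HS1} with $\lambda=\lambda_0$ satisfied by $\tilde y_j$ from the inhomogeneous system \eqref{NHS} satisfied by $(z_j,[z_j'])\in H$, regrouping the $\lambda y_j$ term into the right-hand side, yields $(y_j,[\lambda z_j-z_j'])\in H-\lambda I$ as recorded in the text. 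On the other side, since $H_F-\lambda I$ has positive lower bound, the element $\check y_j\in l^2_W(\I)$ with $(\check y_j,[\lambda z_j-z_j'])\in H_F-\lambda I$ is well defined and unique, and by Lemma \ref{hf u(0)=0} it satisfies $\check y_j(0)=0$.

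The concluding step is an initial-value uniqueness argument. Subtraction shows that $w_j:=y_j-\check y_j\in l^2_W(\I)$ satisfies $(w_j,0)\in H-\lambda I$; that is, $w_j$ is a solution of the homogeneous Hamiltonian system \eqref{HS1} with spectral parameter $\lambda$. Moreover $w_j(0)=y_j(0)-\check y_j(0)=0$. Because $I_n-A(t)$ is invertible on $\I$, the equivalent forward recursion \eqref{SS} propagates zero initial data forward, so $w_j(t)=0$ for every $t\in\I$. Hence $y_j=\check y_j$, and rearranging gives $\tilde y_j(t)=z_j(t)+\check y_j(t)$ for each $j$, which is \eqref{recessive solution charac}.

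I expect the most delicate point to be the intermediate claim $(y_j,[\lambda z_j-z_j'])\in H-\lambda I$, which hinges on a careful bookkeeping with equivalence classes in $L^2_W(\I)$ together with the fact that $\tilde y_j$ solves \eqref{HS1} at $\lambda_0$ rather than $\lambda$; the identification has to be read modulo $\|\cdot\|_W$, and the compact support of $z_j$ is what keeps the computation consistent. The remaining ingredients--the bounded invertibility of $H_F-\lambda I$ from Theorem \ref{bounded below}, the boundary trace from Lemma \ref{hf u(0)=0}, and initial-value uniqueness for \eqref{HS1}--are all immediate from the preceding development.
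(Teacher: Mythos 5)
Your overall architecture coincides with the paper's (largely implicit) proof: decompose $\tilde y_j=z_j+y_j$ with $z_j$ compactly supported and $y_j(0)=0$, use the positive lower bound of $H_F-\ld I$ from Theorem \ref{bounded below} to define $\check y_j$ uniquely, invoke Lemma \ref{hf u(0)=0} to get $\check y_j(0)=0$, and conclude by initial-value uniqueness for the forward recursion. The paper compresses the last step into a single sentence, and your difference argument for $w_j=y_j-\check y_j$ is exactly the intended content.

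There is, however, a genuine error in your justification of the pivotal membership $(y_j,[\ld z_j-z_j'])\in H-\ld I$. You take $\tilde y_j$ to solve $(\ref{HS1})$ at $\ld_0$ and claim the discrepancy can be absorbed ``modulo $\|\cdot\|_W$'' with the help of the compact support of $z_j$. It cannot. If $\tilde y_j$ solves $(\ref{HS1})$ at $\ld_0$, then $(\tilde y_j,[\ld_0\tilde y_j])\in H$, and subtracting $(z_j,[z_j'])\in H$ and then $\ld I$ yields $(y_j,[(\ld_0-\ld)\tilde y_j+\ld z_j-z_j'])\in H-\ld I$. The leftover term $(\ld_0-\ld)[\tilde y_j]$ is not zero in $L^2_W(\I)$: under the definiteness condition $(A_3)$ every nontrivial solution satisfies $\|\tilde y_j\|_W>0$, and the support of $z_j$ is irrelevant to this term. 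Consequently $w_j=y_j-\check y_j$ would solve an inhomogeneous, not homogeneous, system and your uniqueness step would not close. The statement only works when $\tilde Y$ is the recessive solution of $(\ref{HS1})$ at the fixed parameter $\ld$ itself, which is what the paper needs and implicitly intends; such a recessive solution exists because disconjugacy at $\ld_0$ implies disconjugacy at every $\ld<\ld_0$ (by part (4) of Remark \ref{remark} together with $\mathcal{F}_{\ld}(y)=\mathcal{F}_{\ld_0}(y)+(\ld_0-\ld)\|y\|_W^2$), so Theorem \ref{exist of recessive solution} applies with $\ld$ in place of $\ld_0$. With that correction the subtraction is exact and the rest of your argument goes through. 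A smaller shared omission: membership in $H-\ld I$ also requires $\tilde y_j\in l^2_W(\I)$, which both you and the paper take for granted at this stage.
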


\section{Characterizations of Fridriches extensions}

In this section,   the most simple characterization of the matrix $\Tht$  is established
by using of the  recessive  solutions.
Based on this, we  establish a characterization of the Fridriches extension $H_F$ in
terms of boundary conditions via linear independent recessive  solutions.

We assume that $(A_1)-(A_3)$ hold and fix $\ld<\ld_0$.
It follows from Theorem \ref{bounded below} and Lemma \ref{lower bound def index equal}
that  the minimal subspace $H_0$ is bounded from below
and $d_+(H_0)=d_-(H_0)$.
Denote
\begin{equation*}
  d_+(H_0)=d_-(H_0):=d
\end{equation*}
Then  $n\le d\le 2n$, and it follows from (1) of Remark \ref{remark on A_3} that
$(1.1)$  has $d$
linearly independent  solutions $\tht_1,\tht_2,\ldots,\tht_d$ in $l^2_W(\I)$.
As it has discussed in \cite[Section 4]{Ren2014Laa},  $\tht_1,\tht_2,\ldots,\tht_d$ can be arranged such that
\begin{align}\label{tht}
\Tht:=\left((\tht_i,\tht_j)(t)\right)_{1\le i,j\le
2d-2n}
\end{align}
is a constant matrix and it is invertible.

\begin{lemma}\label{sse}\cite[Theorem 5.8]{Ren2014Laa}
Assume that  $(A_1)-(A_3)$ hold.  A subspace
$T\subset(L^2_W(\I))^2$ is a self-adjoint extension  of $H_{0}$ if and only if there
exist two matrices $M_{d\times 2n}$ and $N_{d\times (2d-2n)}$ such
that
\begin{align}\label{MN condition selfadjoint}
    {\rm rank}\, (M,N)=d,\quad MJM^*-N\Theta^TN^*=0,
\end{align}
and
\begin{align}\label{T SSE}
    T=\{(y,[y'])\in H:
My(0)-N\left(\ba{c}[y,\tht_1](\infty)\\
\vdots \\
\left[y,\tht_{2d-2n}\right](\infty)\ea\right)=0\}.
\end{align}
where $\Theta$ is defined by $(\ref{tht})$.
\end{lemma}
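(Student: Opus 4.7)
The plan is to combine the abstract GKN characterization of self-adjoint extensions with a concrete identification of the quotient $H/H_{0}$ via boundary data at $0$ and asymptotic data at $\infty$.

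First, I would establish a discrete Green identity on $H$. For $(y_{i},[y_{i}'])\in H$ with $y_{i}=(u_{i};v_{i})$, the non-homogeneous system $(\ref{NHS})$ forces $\mathcal{L}(y_{i})(t)=(W(t)u_{i}'(t+1);\,0)^{T}$, and Lemma \ref{Liouville formulae} with $k\to\infty$ yields
\begin{equation*}
[(y_{1},[y_{1}']):(y_{2},[y_{2}'])]=\langle y_{1}',y_{2}\rangle_{W}-\langle y_{1},y_{2}'\rangle_{W}=(y_{1},y_{2})(\infty)-y_{2}^{*}(0)Jy_{1}(0),
\end{equation*}
so the bracket $[\cdot:\cdot]$ on $H$ is purely boundary and $(y_{1},y_{2})(\infty)$ exists finitely for all pairs in $H$.

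Second, I would identify $H/H_{0}$ through the linear map
\begin{equation*}
\Phi(y,[y']):=\bigl(y(0),\,([y,\tht_{1}](\infty),\ldots,[y,\tht_{2d-2n}](\infty))^{T}\bigr)\in\C^{2n}\times\C^{2d-2n}.
\end{equation*}
The inclusion $H_{0}\subset\ker\Phi$ is immediate from the explicit formula for $H_{0}$ recalled earlier, since each $\tht_{j}\in\mathcal{D}(H)$. For the reverse inclusion I would decompose arbitrary $x\in\mathcal{D}(H)$ asymptotically along $\tht_{1},\ldots,\tht_{d}$ and invoke invertibility of $\Tht$ to show that $y(0)=0$ together with $[y,\tht_{i}](\infty)=0$ for $i=1,\ldots,2d-2n$ forces $(x,y)(\infty)=0$ for every $x\in\mathcal{D}(H)$. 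Surjectivity of $\Phi$ follows from item (3) of Remark \ref{remark on A_3} (prescribing arbitrary $y(0)$ via compactly supported elements) together with invertibility of $\Tht$ (realizing any prescribed vector of brackets at infinity through linear combinations of the $\tht_{j}$). Hence $\Phi$ descends to an isomorphism $H/H_{0}\cong\C^{2d}$, matching $d_{+}(H_{0})+d_{-}(H_{0})=2d$.

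Third, pushing the bracket through $\Phi$ and using the definition of $\Tht$, I would obtain
\begin{equation*}
[(y_{1},[y_{1}']):(y_{2},[y_{2}'])]=\eta_{2}^{*}\Tht^{T}\eta_{1}-\mu_{2}^{*}J\mu_{1},\qquad \Phi(\gamma_{i})=(\mu_{i},\eta_{i}),
\end{equation*}
so $[\cdot:\cdot]$ has block-diagonal matrix $\mathrm{diag}(-J,\Tht^{T})$ in the $\Phi$-coordinates. By the abstract GKN characterization (Lemma 2.2), self-adjoint extensions $T$ of $H_{0}$ correspond to $d$-tuples $\beta_{1},\ldots,\beta_{d}\in H$ that are linearly independent modulo $H_{0}$ and mutually isotropic for $[\cdot:\cdot]$. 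Packing the rows $(J\mu_{\beta_{j}})^{*}$ into $M$ and $-(\Tht^{T}\eta_{\beta_{j}})^{*}$ into $N$, linear independence modulo $H_{0}$ becomes $\mathrm{rank}(M,N)=d$, mutual isotropy becomes $MJM^{*}-N\Tht^{T}N^{*}=0$, and the defining condition $[\gamma:\beta_{j}]=0$ of $T$ reads $My(0)-N([y,\tht_{i}](\infty))_{i}=0$, as required.

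The hardest step will be Step 2: proving $\ker\Phi=H_{0}$ together with the identity $(y_{1},y_{2})(\infty)=\eta_{2}^{*}\Tht^{T}\eta_{1}$. Both demand an asymptotic representation of elements of $\mathcal{D}(H)$ along the square summable solutions, with careful bookkeeping of conjugates and transposes to land precisely on $\Tht^{T}$ rather than $\Tht$ or $\Tht^{*}$, and one must also keep straight the distinction between the $2d-2n$ solutions that carry nontrivial boundary data at infinity and the remaining $2n-d$ that are effectively absorbed into $H_{0}$.
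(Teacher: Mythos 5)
First, a point of comparison: the paper does not prove this lemma at all --- it is imported verbatim from \cite[Theorem 5.8]{Ren2014Laa} --- so there is no in-paper argument to measure your proposal against, and it must be judged on its own. Your outline does follow the same overall route as the cited source: the Lagrange identity of Step 1 is correct (for $(y,[y'])\in H$ the system (\ref{NHS}) gives $\mathcal{L}(y)(t)=(W(t)u'(t+1);0)$, so Lemma \ref{Liouville formulae} yields $[\gamma_1:\gamma_2]=(y_1,y_2)(\infty)-y_2^*(0)Jy_1(0)$ with the limit existing by Cauchy--Schwarz in $l^2_W(\I)$), and Step 3 --- feeding a GKN set into the abstract GKN lemma quoted from \cite{Shi2012} and packaging the boundary data into $(M,N)$ --- is routine once Step 2 is in place. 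The transpose/inverse ambiguities you flag are indeed harmless: $\Tht$ is invertible and $(M,N)\sim(GM,GN)$, so any $\Tht^{-1}$ factors can be absorbed into $N$.

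The genuine gap is Step 2, and it is the substance of the theorem rather than bookkeeping. As written, ``decompose arbitrary $x\in\mathcal{D}(H)$ asymptotically along $\tht_1,\ldots,\tht_d$'' presupposes exactly the structure result that has to be proved; taken literally it is false, since an element of $\mathcal{D}(H)$ solves the inhomogeneous system (\ref{NHS}) and need not be pointwise asymptotic to any solution of the homogeneous one. What is actually needed (and what \cite{Ren2014Laa} establishes using $(A_3)$, the identity $H_0^*=H$, and the formula $\dim(H/H_0)=d_++d_-=2d$ for closed Hermitian subspaces) is: (i) surjectivity of $\Phi$, which your patching argument via Remark \ref{remark on A_3}(3) together with the invertibility of $\Tht$ does deliver; (ii) $\ker\Phi=H_0$, which is then best obtained by a dimension count against $\dim(H/H_0)=2d$ rather than by decomposition; and (iii) the statement that the radical of the form $(\cdot,\cdot)(\infty)$ on $\mathcal{D}(H)$ is precisely $\{y:(y,\tht_j)(\infty)=0,\ 1\le j\le 2d-2n\}$, so that the induced form on the $(2d-2n)$-dimensional quotient has Gram matrix $\Tht$ in the basis $\{[\tht_j]\}$ --- which, incidentally, produces $\Tht^{-1}$ factors in the $\eta$-coordinates, not $\Tht^T$ as in your displayed formula. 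Point (iii) also relies on the nontrivial fact, built into the arrangement (\ref{tht}), that the remaining solutions $\tht_{2d-2n+1},\ldots,\tht_d$ can be chosen inside that radical. Until (ii) and (iii) are actually proved, the translation of the GKN conditions into (\ref{MN condition selfadjoint})--(\ref{T SSE}) does not close.
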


\begin{remark}
 If matrices $M$ and $N$ satisfy the conditions in Lemma $\ref{sse}$, we call $(M,N)$ a self-adjoint  boundary condition.
  Let  $(M,N)$ and $(M',N')$ be two self-adjoint  boundary conditions.
If there exists a $d\times d$ invertible matrix $G$ such that
\begin{equation*}
M'=GM, \quad N'=GN,
\end{equation*}
then $(M,N)$ and $(M',N')$ are regarded as  one self-adjoint  boundary condition.
\end{remark}

\begin{lemma}\label{Tht lemma}
Assume that  $(A_1)-(A_3)$ hold and let $\ld<\ld_0$.
Then $(1.1)$ has $d$ linearly independent solutions
$\tilde{y}_1(t), \tilde{y}_2(t), \ldots, \tilde{y}_n(t)$, $\hat{y}_1(t), \hat{y}_2(t), \ldots, \hat{y}_{d-n}(t)$  in $l^2_W(\I)$ such that
\begin{enumerate}
  \item $\tilde{Y}(t)=[\tilde{y}_1(t), \tilde{y}_2(t), \ldots, \tilde{y}_n(t)]$ is a recessive solution of $(1.1)$ satisfying $\tilde{U}(1)=I_n$;
  \item $\hat{Y}(t)=[\hat{y}_{1}(t),\hat{y}_{2}(t), \ldots, \hat{y}_{d-n}(t)]$ satisfies $\hat{U}(1)=0$ and  the  $i_1,\ldots,i_{d-n}$-th
  rows of $\hat{V}(1)$ form $I_{d-n}$.
\item Let
\begin{equation}\label{tht1thtd}
 \tht_j=\tilde{y}_{i_j}, \quad \tht_{d-n+j}=\hat{y}_{j},\quad j=1,2,\ldots,d-n.
\end{equation}
Then $\Tht$ defined by ($\ref{tht}$) has the form
\begin{equation}\label{Tht1}
  \Tht=\left(
         \begin{array}{cc}
           0 & I_{d-n} \\
           -I_{d-n} & 0 \\
         \end{array}
       \right).
\end{equation}
\end{enumerate}
\end{lemma}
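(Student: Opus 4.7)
The plan is to construct the $d$ solutions in two groups and then check the block structure of $\Theta$ directly at $t=1$.

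First I would establish (1). Applying Theorem 3.1 with $t_1=1$, I obtain a recessive solution $\tilde{Y}=(\tilde{U};\tilde{V})$ with $\tilde{U}(1)=I_n$. The construction in the proof of Theorem 3.1 produces $\tilde{Y}$ from a prepared basis $Y_2$ by right multiplication by a constant matrix, so $\tilde{Y}$ is itself prepared; in particular $\tilde{V}^*(1)\tilde{U}(1)=\tilde{U}^*(1)\tilde{V}(1)$, i.e.\ $\tilde{V}(1)$ is Hermitian. The characterization in Theorem 3.7 writes $\tilde{y}_j=z_j+\check{y}_j$ with $z_j$ of finite support and $\check{y}_j\in\mathcal{D}(H_F-\lambda I)\subset L^2_W(\I)$; together with Remark 3.4(2) this shows $\tilde{y}_j\in l^2_W(\I)$ for each $j$, and the columns of $\tilde{Y}$ are linearly independent because $\tilde{U}(1)=I_n$.

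Next I would establish (2) by a dimension count. By $(A_3)$ and Remark 3.4(1) the space $\mathcal{S}$ of $l^2_W(\I)$-solutions of $(1.1)$ has dimension $d$. The linear map $\Phi\colon\mathcal{S}\to\C^n$, $\Phi(y)=u(1)$, is surjective because the columns of $\tilde{Y}$ provide preimages of all standard basis vectors, hence $\ker\Phi$ has dimension $d-n$. Because $(1.1)$ is uniquely solvable from $y(1)$ (the matrices $I_n-A(t)$ being invertible), the map $\Psi\colon\ker\Phi\to\C^n$, $\Psi(y)=v(1)$, is injective; thus $\Psi(\ker\Phi)$ is a $(d-n)$-dimensional subspace of $\C^n$. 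Pick indices $1\le i_1<\cdots<i_{d-n}\le n$ such that the row projection $y\mapsto(e_{i_1}^*v(1),\ldots,e_{i_{d-n}}^*v(1))^T$ is a bijection from $\Psi(\ker\Phi)$ onto $\C^{d-n}$, and take the dual basis $\hat{y}_1,\ldots,\hat{y}_{d-n}$ of $\ker\Phi$. Then $\hat{U}(1)=0$ and the $(i_1,\ldots,i_{d-n})$-rows of $\hat{V}(1)=[\hat{v}_1(1),\ldots,\hat{v}_{d-n}(1)]$ form $I_{d-n}$. The combined family $\{\tilde{y}_1,\ldots,\tilde{y}_n,\hat{y}_1,\ldots,\hat{y}_{d-n}\}$ is linearly independent because any dependence, when evaluated at $t=1$, forces the coefficients of $\tilde{y}_j$ to vanish via $\Phi$, and then forces the coefficients of $\hat{y}_j$ to vanish via $\Psi$.

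Finally, to establish (3) I would compute $\Theta$ at $t=1$. Since $\lambda$ is real, for any two solutions of $(1.1)$ the quantity $(y_1,y_2)(t)=v_2^*(t)u_1(t)-u_2^*(t)v_1(t)$ is constant in $t$ by Lemma 3.2, so evaluation at $t=1$ is legitimate. With $\theta_j=\tilde{y}_{i_j}$ and $\theta_{d-n+j}=\hat{y}_j$ for $j=1,\ldots,d-n$, I obtain the four $(d-n)\times(d-n)$ blocks of $\Theta$ as follows: the top-left block is $\bigl([\tilde{V}(1)^*-\tilde{V}(1)]_{i_k,i_j}\bigr)=0$ by the Hermiticity of $\tilde{V}(1)$; the top-right block has entry $\hat{v}_k^*(1)e_{i_j}=\overline{\delta_{kj}}=\delta_{kj}$, giving $I_{d-n}$; the bottom-left block has entry $-e_{i_k}^*\hat{v}_j(1)=-\delta_{jk}$, giving $-I_{d-n}$; and the bottom-right block vanishes since $\hat{U}(1)=0$. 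Assembling these yields formula $(4.5)$.

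The main obstacle is Step 2: one must correctly identify the $l^2_W$-solution space as being $d$-dimensional (which is exactly the content of $(A_3)$ together with Remark 3.4(1)), and one must justify that the recessive construction produces a \emph{prepared} basis so that $\tilde{V}(1)$ is Hermitian in Step 3. Both follow from the references invoked, but they are the non-routine ingredients binding the argument together.
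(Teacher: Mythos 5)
Your proof is correct and follows the same overall architecture as the paper's: existence of the recessive solution with $\tilde U(1)=I_n$, completion to a basis of the $d$-dimensional space of $l^2_W(\I)$-solutions by $d-n$ solutions with $\hat U(1)=0$ normalized so that the $i_1,\ldots,i_{d-n}$-th rows of $\hat V(1)$ form $I_{d-n}$ (your surjectivity/injectivity dimension count for $y\mapsto u(1)$ and $y\mapsto v(1)$ is a clean rephrasing of the paper's subtraction $\hat Y=Y_1-\tilde Y U_1(1)$ followed by column normalization), and a block-by-block evaluation of $\Tht$ using the constancy of $(\cdot,\cdot)(t)$ for real $\ld$. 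The off-diagonal and lower-right blocks are computed exactly as in the paper. The one genuine divergence is the upper-left block $\left((\tht_i,\tht_j)\right)_{1\le i,j\le d-n}$: you kill it at $t=1$ by observing that the recessive solution produced in Theorem \ref{exist of recessive solution} is a prepared (conjoined) basis, so that $\tilde V(1)=\tilde U^*(1)\tilde V(1)$ is Hermitian; the paper instead evaluates at $t=\infty$, writing $\tilde y_{i_j}=z_{i_j}+\check y_{i_j}$ via Theorem \ref{cha of recessive solu} and invoking the self-adjointness of $H_F$ together with $\check y_{i_j}(0)=0$. Your route is more elementary for that block (preparedness does survive right multiplication by the constant matrix $U_2^{-1}(t_1)$, as you note), while the paper's route is the one that would survive if one could not control $\tilde V(1)$ directly; in any case you still need Theorem \ref{cha of recessive solu} to conclude that the columns of $\tilde Y$ lie in $l^2_W(\I)$, a point you actually justify more explicitly than the paper does. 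One small caveat, shared with the paper: Theorem \ref{exist of recessive solution} is stated for $\ld=\ld_0$, so strictly one should first remark that $(A_1)$ and $(A_2)$ persist for $\ld<\ld_0$ (since $\mathcal{F}_{\ld}(y)\ge\mathcal{F}_{\ld_0}(y)$ on $\mathcal{D}_0(\I)$) before invoking it.
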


\begin{proof}
It follows from  Theorem \ref{exist of recessive solution} that the recessive solution of $(1.1)$ exists,
which is denoted by $\tilde{Y}(t)$ and satisfies $\tilde{U}(1)=I_n$.
By the definition of the recessive solution, one get that
the $n$  columns of $\tilde{Y}(t)$, denote by $\tilde{y}_1(t), \tilde{y}_2(t), \ldots, \tilde{y}_n(t)$,
are $n$  linear independent solutions belonging to $l_W^2$.

By $y_{n+1}(t), y_{n+2}(t), \ldots, y_d(t)$ denote the other $d-n$ linear independent solutions of (1.1) in $l_W^2(\I)$,
and denote $Y_1(t)=[y_{n+1}(t), y_{n+2}(t), \ldots, y_d(t)]$.
Then $Y_1(t)$ is a $2n\times (d-n)$ matrix-valued  solution of (1.1).
It is clear  $\hat{Y}(t):=Y_1(t)-\tilde{Y}(t)U_1(1)$ is still a $2n\times (d-n)$ matrix-valued solution of (1.1),
satisfying  $\hat{U}(1)=0$. Denote
\begin{equation*}
 \hat{Y}(t)=[\hat{y}_{1}(t),\hat{y}_{2}(t), \ldots, \hat{y}_{d-n}(t)].
\end{equation*}
Then  $\tilde{y}_1(t), \tilde{y}_2(t), \ldots, \tilde{y}_n(t), \hat{y}_{1}(t),\hat{y}_{2}(t), \ldots, \hat{y}_{d-n}(t)$
are $d$ linear independent solutions of (1.1) in $l_W^2$.

It  follows from $\hat{U}(1)=0$  that $\hat{V}(1)$ has rank $d-n$.
We first consider the case that the first $d-n$ rows of $\hat{V}(1)$ form a $(d-n)\times (d-n)$ non-singular sub-matrix.
By multiplying a   $(d-n)\times (d-n)$ non-singular constant matrix from right side if necessary,
we can get a new $2n\times (d-n)$ matrix-valued solution of (1.1),  still denoted by $\hat{Y}(t)$,
whose the first  $d-n$ rows of $\hat{V}(1)$ form $I_{d-n}$. In this case,
\begin{equation*}
  (\tilde{Y}(1),\hat{Y}(1))=\left(
                                  \begin{array}{cc}
                                    I_n & O_{n\times (d-n)} \\
                                    \tilde{V}(1) & \begin{array}{c}
                                                       I_{d-n} \\
                                                       \hat{V}_{1}(1)
                                                     \end{array}
                                     \\
                                  \end{array}
                                \right),
\end{equation*}
where $\hat{V}_{1}(1)$  is a $(2n-d)\times (d-n)$ matrix.

For the general case that the $i_1,\ldots,i_{d-n}$-th   rows of $\hat{V}(t_1)$ form a $(d-n)\times (d-n)$ non-singular sub-matrix.
By multiplying a   $(d-n)\times (d-n)$ non-singular constant matrix from right side if necessary,
we can get a new $2n\times (d-n)$ matrix solution of (1.1),  still denoted by $\hat{Y}(t)$,
whose  the  $i_1,\ldots,i_{d-n}$-th    rows of $\hat{V}(t_1)$ form $I_{d-n}$.

Let $\tht_j$ be defined as (\ref{tht1thtd}).
Then it can be verifies directly that
\begin{align*}
 & \left((\tht_i,\tht_j)(1)\right)_{1\le i\le d-n,d-n+1\le j\le 2d-2n}\\
  =&-\left((\tht_i,\tht_j)(1)\right)_{d-n+1\le i\le 2d-2n,  1\le j\le d-n}=I_{d-n}
\end{align*}
and
\begin{equation*}
\left((\tht_i,\tht_j)(1)\right)_{d-n+1\le i\le 2d-2n,  d-n+1\le j\le 2d-2n}=0.
\end{equation*}
Further,  it follows
from  (\ref{recessive solution charac}) and (\ref{zj=ej}) that
\begin{equation*}
\tilde{y}_j^*J\tilde{y}_k(\infty)=\check{y}_j^*J\check{y}_k(\infty)=\check{y}_j^*(0)J\check{y}_k(0)=0,\quad 1\le j,k\le d-n,
\end{equation*}
where  the self-adjointness of $H_F$ and  $\check{u}_j(0)=0$ $(j=1,2,\ldots,n)$ are used.
This implies that
\begin{equation*}
\left((\tht_i,\tht_j)(1)\right)_{1\le i,j\le d-n}=0.
\end{equation*}
So, (\ref{Tht1}) holds. The whole proof is complete.
\end{proof}

\begin{theorem}
Assume that $(A_1)-(A_3)$ hold. Let $\tht_j$, $j=1,\ldots,d-n$,  be defined by Lemma \ref{Tht lemma}. Then
\begin{align}\label{T1}
    H_F=\{(y,[y'])\in H:
u(0)=0,\quad (y,\tht_{j})(\infty)=0,\quad j=1,2,\ldots, d-n\},
\end{align}
\end{theorem}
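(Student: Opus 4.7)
The plan is to realize the right-hand side of (\ref{T1}), which I denote $T$, as a self-adjoint extension of $H_0$ via Lemma \ref{sse}, then verify that $H_F\subseteq T$, and conclude equality by maximality of self-adjoint extensions. For the first step I would take
$$M = \begin{pmatrix} I_n & 0 \\ 0 & 0 \end{pmatrix}\in \C^{d\times 2n}, \qquad N = \begin{pmatrix} 0 & 0 \\ I_{d-n} & 0 \end{pmatrix}\in \C^{d\times(2d-2n)},$$
so that $My(0) - N\bigl((y,\tht_1)(\infty),\ldots,(y,\tht_{2d-2n})(\infty)\bigr)^T = 0$ is equivalent to the pair of conditions $u(0)=0$ and $(y,\tht_j)(\infty)=0$ for $1\le j\le d-n$. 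Clearly $\mathrm{rank}(M,N) = n+(d-n)=d$, and a short block computation using the form of $\Tht$ from Lemma \ref{Tht lemma} gives $MJM^* = 0$ and $N\Tht^T N^* = 0$; so (\ref{MN condition selfadjoint}) holds, and $T$ is a self-adjoint extension of $H_0$.

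Next I would prove $H_F \subseteq T$. Fix $(y,[y'])\in H_F$. Lemma \ref{hf u(0)=0} gives $y(0)=0$, which in particular yields $u(0)=0$, so the first boundary condition in (\ref{T1}) is automatic. For the second, recall from Lemma \ref{Tht lemma} that $\tht_j = \tilde{y}_{i_j}$, and from Theorem \ref{cha of recessive solu} that $\tilde{y}_{i_j} = z_{i_j} + \check{y}_{i_j}$, where $z_{i_j}$ is supported in $[0,s_0]\cap\Z$ and $(\check{y}_{i_j},[\,\cdot\,])\in H_F$. Since $z_{i_j}$ vanishes eventually, $(y,z_{i_j})(\infty) = 0$, and it remains to show $(y,\check{y}_{i_j})(\infty)=0$.

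To this end, I would apply Lemma \ref{inner charact} to the pair $(y,[y']),(\check{y}_{i_j},[\check{y}'_{i_j}])\in H_F$, swap the roles, conjugate, and use Hermiticity of $B(t)$ and $C(t)$ to obtain the Green-type identity
$$\langle y',\check{y}_{i_j}\rangle_W - \langle y,\check{y}'_{i_j}\rangle_W = (y,\check{y}_{i_j})(\infty) - (y,\check{y}_{i_j})(0).$$
The left-hand side vanishes by self-adjointness of $H_F$; and applying Lemma \ref{hf u(0)=0} to $\check{y}_{i_j}$ itself gives $\check{y}_{i_j}(0)=0$, which combined with $y(0)=0$ forces $(y,\check{y}_{i_j})(0)=0$. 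Hence $(y,\check{y}_{i_j})(\infty)=0$, so $(y,\tht_j)(\infty)=0$ and $H_F\subseteq T$. Since two self-adjoint extensions of the same Hermitian relation that are comparable by inclusion must coincide, $H_F = T$. The main obstacle will be the block matrix verification of $MJM^*-N\Tht^T N^* = 0$, which hinges on the special block anti-diagonal form of $\Tht$ established in Lemma \ref{Tht lemma}; with that in hand the rest of the argument is structural.
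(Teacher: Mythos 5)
Your proposal is correct and follows essentially the same route as the paper's own proof: the same matrices $M$ and $N$ verified against Lemma \ref{sse}, the same use of Lemma \ref{hf u(0)=0} and the decomposition $\tht_j=z_{i_j}+\check{y}_{i_j}$ from Theorem \ref{cha of recessive solu} to get $H_F\subseteq T$, and the same conclusion by comparability of self-adjoint extensions. The only difference is that you spell out the Lagrange-identity step behind $(y,\check{y}_{i_j})(\infty)=0$, which the paper compresses into a one-line appeal to the self-adjointness of $H_F$.
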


\begin{proof}
Denote
\begin{align}\label{T}
    T=\{(y,[y'])\in H:
u(0)=0,\quad (y,\tht_{j})(\infty)=0,\quad j=1,2,\ldots, d-n\}.
\end{align}
We first show  $H_F\subset T$.
For any $(y,[y'])\in H_F$, it follows from Lemma \ref{hf u(0)=0} that $u(0)=0$.
In addition, by the self-adjointness of $H_F$, one has
\begin{equation*}
  (y,\tht_j)(\infty)=(y,\check{y}_{i_j})(\infty)=(y,\check{y}_{i_j})(0)=0,\quad j=1,2,\ldots, d-n,
\end{equation*}
where  (\ref{recessive solution charac}), (\ref{zj=ej}) and $u(0)=\check{u}_{i_j}(0)=0$ are used.
So,  $H_F\subset T$ holds.

Next, we show  $T$  is a self-adjoint extension of $H_{0}$.
It follows from Lemma \ref{Tht lemma} that ($\ref{Tht1}$) holds.
Set
\begin{equation*}\label{M N 2}
  M=\left(
        \begin{array}{cc}
          I_n & 0 \\
          0&0\\
        \end{array}
      \right)_{d\times 2n},\\
      \quad N=\left(
                          \begin{array}{cc}
                          0&0\\
                            I_{d-n} &0 \\
                          \end{array}
                        \right)_{d\times (2d-2n)},
\end{equation*}
It can be easily verified that $(M, N)$ is a self-adjoint boundary condition and  for each $y\in \mathcal{D}(H)$, it follows that
\begin{align*}
My(0)=\left(\ba{c}u(0)\\
0\ea\right),
\end{align*}
and
\begin{align*}
N\left(\ba{c}(y,\tht_1)(\infty)\\\vdots\\
(y,\tht_{2d-2n})(\infty)\ea\right)
=\left(\ba{c}0\\\vdots\\0\\(y,\tht_{1})(\infty)\\\vdots\\
(y,\tht_{d-n})(\infty)\ea\right).
\end{align*}
Therefore,  $T$    is a self-adjoint extension  of $H_{0}$.
This, together with $H_F\subset T$, yields that  $T=H_F$.
The proof is complete.
\end{proof}

\section*{Acknowledgement}
This work is supported by the NSF of Shandong Province, P.R. China [grant numbers  ZR2020MA012 and ZR2021MA070].


\end{CJK*}

\end{document}